\newtheorem{theorem}{Theorem}[section]
\newtheorem{lemma}[theorem]{Lemma}
\newtheorem{definition}[theorem]{Definition}
\newtheorem*{theorem*}{Theorem}
\newtheorem*{lemma*}{Lemma}
\newtheorem*{remark*}{Remark}
\newtheorem*{definition*}{Definition}
\newtheorem*{proposition*}{Proposition}
\newtheorem*{corollary*}{Corollary}
\numberwithin{equation}{section}
\newcommand{\vertiii}[1]{{\left\vert\kern-0.25ex\left\vert\kern-0.25ex\left\vert #1
    \right\vert\kern-0.25ex\right\vert\kern-0.25ex\right\vert}}
\newcommand{\real}{\mathbb{R}}
\def\qed{\,\unskip\kern 6pt \penalty 500
\raise -2pt\hbox{\vrule \vbox to8pt{\hrule width 6pt
\vfill\hrule}\vrule}\par}
\definecolor{darkblue}{rgb}{0.05, .05, .65}
\definecolor{darkgreen}{rgb}{0.1, .65, .1}
\definecolor{darkred}{rgb}{0.8,0,0}
\newcommand{\beqn}{\begin{equation}}
\newcommand{\eeqn}{\end{equation}}
\newcommand{\bear}{\begin{eqnarray}}
\newcommand{\eear}{\end{eqnarray}}
\newcommand{\bean}{\begin{eqnarray*}}
\newcommand{\eean}{\end{eqnarray*}}
\begin{document}


\title{\huge \bf Optimal existence, uniqueness and blow-up for a quasilinear diffusion equation with spatially inhomogeneous reaction}

\author{
\Large Razvan Gabriel Iagar\,\footnote{Departamento de Matem\'{a}tica
Aplicada, Ciencia e Ingenieria de Materiales y Tecnologia
Electr\'onica, Universidad Rey Juan Carlos, M\'{o}stoles,
28933, Madrid, Spain, \textit{e-mail:} razvan.iagar@urjc.es},\\
[4pt] \Large Marta Latorre\,\footnote{Departamento de Matem\'{a}tica
Aplicada, Ciencia e Ingenieria de Materiales y Tecnologia
Electr\'onica, Universidad Rey Juan Carlos, M\'{o}stoles,
28933, Madrid, Spain, \textit{e-mail:} marta.latorre@urjc.es},
\\[4pt] \Large Ariel S\'{a}nchez,\footnote{Departamento de Matem\'{a}tica
Aplicada, Ciencia e Ingenieria de Materiales y Tecnologia
Electr\'onica, Universidad Rey Juan Carlos, M\'{o}stoles,
28933, Madrid, Spain, \textit{e-mail:} ariel.sanchez@urjc.es}\\
[4pt] }
\date{}
\maketitle

\begin{abstract}
Well-posedness and a number of qualitative properties for solutions to the Cauchy problem for the following nonlinear diffusion equation with a spatially inhomogeneous source
$$
\partial_tu=\Delta u^m+|x|^{\sigma}u^p,
$$
posed for $(x,t)\in\real^N\times(0,T)$, with exponents $1<p<m$ and $\sigma>0$, are established. More precisely, we identify the \emph{optimal class of initial conditions} $u_0$ for which (local in time) existence is ensured and prove \emph{non-existence of solutions} for the complementary set of data. We establish then (local in time) \emph{uniqueness and a comparison principle} for this class of data. We furthermore prove that any non-trivial solution to the Cauchy problem \emph{blows up in a finite time} $T\in(0,\infty)$ and \emph{finite speed of propagation} holds true for $t\in(0,T)$: if $u_0\in L^{\infty}(\real^N)$ is an initial condition with compact support and blow-up time $T>0$, then $u(t)$ is compactly supported for $t\in(0,T)$. We also establish in this work the \emph{absence of localization at the blow-up time} $T$ for solutions stemming from compactly supported data.
\end{abstract}

\

\noindent {\bf Mathematics Subject Classification 2020:} 35A24, 35B33, 35C06,
35K10, 35K57, 35K65.

\smallskip

\noindent {\bf Keywords and phrases:} reaction-diffusion equations, inhomogeneous reaction, well-posedness, finite time blow-up, finite speed of propagation, non-existence of solutions.

\section{Introduction}

The present work is aimed at establishing several qualitative properties of solutions to the Cauchy problem associated to a quasilinear reaction-diffusion equation
\begin{subequations}
\begin{equation}\label{eq1}
\partial_tu=\Delta u^m+|x|^{\sigma}u^p, \qquad (x,t)\in\real^N\times(0,T),
\end{equation}
\begin{equation}\label{ic}
u(x,0)=u_0(x), \qquad x\in\real^N,
\end{equation}
\end{subequations}
posed in dimension $N\geq1$ and with exponents $1<p<m$, $\sigma>0$, although some results will also work for the limiting exponent $p=1$. The more general class of initial conditions we work with is
\begin{equation}\label{icond}
u_0\in C(\real^N)\cap L^{\infty}(\real^N), \qquad u_0(x)\geq 0 \ {\rm for \ any} \ x\in\real^N, \qquad u_0\not\equiv0,
\end{equation}
where $C(\real^N)$ means the class of continuous functions in $\real^N$. Eq. \eqref{eq1} features an interesting competition between the diffusion term and the inhomogeneous source term. The former has a conservative effect on the total mass of the solution, spreading it throughout the space $\real^N$ during the evolution, while the latter leads to an increase of the $L^1$ norm of non-negative solutions. Together with this, let us notice that the source term weighs more over sets lying at positive distance from the origin, where $|x|^{\sigma}$ is large, and could be formally seen as negligible at the origin. All these imbalances enhance the mathematical interest (and also difficulty) of the study of the properties of the solutions to Eq. \eqref{eq1}.

Equations such as Eq. \eqref{eq1} with $\sigma=0$ have been a permanent object of study in the last half century, and by now there are two monographs describing well most of the features of their solutions, namely \cite{QS} for the semilinear case $m=1$ and \cite{S4} for the quasilinear case $m>1$. In particular, one of the most important properties of the solutions to reaction-diffusion equations is their possibility to blow up in finite time. By finite time blow-up we understand that, given an initial condition $u_0\in L^{\infty}(\real^N)$, there exists a time $T\in(0,\infty)$, called blow-up time, such that $u(t)\in L^{\infty}(\real^N)$ for any $t\in(0,T)$, but $u(T)\notin L^{\infty}(\real^N)$. In connection with this mathematical phenomenon, a number of questions have been raised (and at least partially solved in the homogeneous case $\sigma=0$):
\begin{itemize}
  \item for which initial conditions does the blow-up occur, stemming from the classical work by Fujita \cite{Fu66}, and later answered by \cite{Pi97, Pi98, Qi98, Su02} for weighted source terms.
  \item assuming that finite time blow-up occurs, at what points does this happen (the blow-up set) and with which time scale (the blow-up rate), see for example \cite{QS, S4, CdPE98, CdPE02, AT05, FdPV06, FdP22}.
  \item large time behavior near the blow-up time, which means, establishing the profiles (usually in self-similar form) to which general classes of solutions converge as $t\to T$, see for example \cite{QS, S4, GV97, Su03, FdPV06}.
  \item continuation of solutions after the blow-up time, see for example \cite{GV97}.
  \item establishing blow-up solutions with unusual or unexpected rates, a phenomenon known nowadays as blow-up of type II, see for example \cite{HV94, QS, GV97, MM09, MS21}.
\end{itemize}
All these questions show that the problems related to the mathematical phenomenon of finite time blow-up are very rich and interesting to study, but also, by inspecting the proofs, one can see that they are in many cases rather difficult.

Before stating our main results, we give below a short presentation of previous results for equations such as Eq. \eqref{eq1}, including a weighted source term. The question of local (in time) well-posedness of solutions is dealt with, up to our knowledge, for the first time by Baras and Kersner \cite{BK87}, where optimal classes of data for existence and non-existence of solutions are established for $m=1$ and a general weight $a(x)$ instead of $|x|^{\sigma}$. Shortly after, Andreucci and DiBenedetto \cite{AdB91} performed a very deep analysis of the properties related to well-posedness, regularity for short times and initial trace for solutions to the Cauchy problem for an equation similar to Eq. \eqref{eq1} but with a weight $(1+|x|)^{\sigma}$ and for any $\sigma\in\real$. Going back to the semilinear case $m=1$, the Fujita exponent $p_F(1,\sigma)=1+(\sigma+2)/N$ limiting between the range $1<p\leq p_F(1,\sigma)$ in which all non-trivial solutions blow up in finite time, and the range $p>p_F(1,\sigma)$ when global solutions also exist, has been obtained in \cite{Pi97, Pi98}, even for more general weights than the pure powers $|x|^{\sigma}$. Later on, a sequence of papers \cite{GLS, GS11, GLS13, GS18} answered one of the most natural questions related to the blow-up sets, that is, whether the origin can be a blow-up point or not for solutions to Eq. \eqref{eq1} with $m=1$. A classification of self-similar solutions to Eq. \eqref{eq1} for $m=1$ and depending on the magnitude of $p$ has been given in \cite{FT00}, while Mukai and Seki \cite{MS21} proved that for large values of $p>1$, blow-up of type II, with an infinite sequence of possible different blow-up rates, occurs, by constructing explicit examples.

In the quasilinear case $m>1$, the Fujita exponent $p_F(m,\sigma)=m+(\sigma+2)/N$ has been obtained by Qi \cite{Qi98} and extended by Suzuki \cite{Su02} to more general weights $K(x)$ instead of $|x|^{\sigma}$. Let us stress here that both of these papers \emph{only deal with the range $p>m$}, thus proving that any non-trivial solution blows up in finite time provided $m<p\leq p_F(m,\sigma)$ and that there exist global solutions for $p>p_F(m,\sigma)$, but without considering exponents $p\in(1,m)$ or $p=m$. Moreover, in \cite{Su02}, a second critical exponent, related to the borderline between blow-up solutions and global solutions based on the decay of the initial condition $u_0(x)$ as $|x|\to\infty$, is also identified. Shortly after, Andreucci and Tedeev established in \cite[Theorem 1.3]{AT05} blow-up rates for solutions to Eq. \eqref{eq1} also for $p>m$, limited (in our opinion, by technical reasons) to some finite interval for $\sigma>0$. In a different direction, a number of works \cite{FdPV06, KWZ11, Liang12, FdP22} (see also references therein) addressed the previous questions to porous medium equations with a spatially inhomogeneous source, but with compactly supported weights $a(x)$ instead of $|x|^{\sigma}$. Many problems related to the blow-up set, rate and asymptotic behavior are solved in this case, but the techniques make strong use of the fact that the weight is bounded, which allows to employ strong regularity results in H\"{o}lder spaces and obtain useful estimates for the solutions. Such techniques, at least at a first sight, cannot be easily extended to unbounded weights such as $|x|^{\sigma}$. We stress here that some of the quoted works (and many others in the semilinear case, stemming from the Hardy inequality) also dealt with values $\sigma<0$, but we do not extend the discussion in this direction here.

We thus notice that in the range of exponents $1<p<m$, even the basic results related to finite time blow-up are missing from literature. Being aware of the importance of the self-similar solutions for the study of nonlinear diffusion equations and in particular for Eq. \eqref{eq1}, the authors performed a classification of them in the range $1\leq p<m$ in \cite{IS19, IS21, ILS23}, the former works dealing with dimension $N=1$ and the latter one with any dimension $N\geq2$. It has been noticed that, if $1<p<m$ and $\sigma>0$, there exists at least one compactly supported self-similar solution, and its blow-up set depends on the magnitude of $\sigma$: for $\sigma>0$ sufficiently small the blow-up set is $\real^N$, while for $\sigma$ large a less common phenomenon known as blow-up at infinity (in the sense of \cite{La84, GU06}) holds true. This means that, for any $x\in\real^N$, $u(t,x)<\infty$ even at $t=T$, while
$$
M(t):=\|u(t)\|_{\infty}=u(x(t),t)\to\infty, \qquad |x(t)|\to\infty, \qquad {\rm as} \ t\to T.
$$
Together with these self-similar solutions, we have classified all the possible behaviors of self-similar subsolutions and supersolutions to Eq. \eqref{eq1}, something that will be very useful in the sequel. Let us also stress here that self-similar solutions to Eq. \eqref{eq1} with $p\in(1,m)$ have been classified also for negative values of $\sigma\in[-2,0)$ in \cite{ILS23, IS23, IMS23}, but the outcome is very different and the same questions for singular weights will not be addressed in the present paper.


\medskip

\noindent \textbf{Main results.} Assume from now on that $1<p<m$ and $\sigma>0$. In order to state our main results, we have to make first clear the notion of solution employed throughout the paper, which is the standard weak solution (see for example \cite{Su02}). Let us introduce first the following notation that will be used throughout the paper:
$$
Q_T:=\real^N\times(0,T).
$$
Moreover, for a fixed $t>0$ we will write $u(t)$ for the mapping $x\mapsto u(x,t)$. For the sake of completeness, we give the definition of a weak solution below.
\begin{definition}[Weak solution]\label{def.weak}
Given $u_0$ as in \eqref{icond}, we say that a function $u$ is a \emph{weak solution} to the Cauchy problem \eqref{eq1}-\eqref{ic} in $Q_T$ if $u\in C(\real^N\times(0,t))$ for any $t\in(0,T)$ and if, for any bounded domain $\Omega\subset\real^N$ and for any test function $\varphi\in C^{2,1}(\overline{\Omega}\times[0,T))$ such that $\varphi=0$ on $\partial\Omega\times(0,T)$, we have the following equality
\begin{equation}\label{weaksol}
\begin{split}
\int_{\Omega}u(x,t)\varphi(x,t)\,dx&-\int_{\Omega}u_0(x)\varphi(x,0)\,dx\\&=\int_0^t\int_{\Omega}\left(u(x,s)\partial_s\varphi(x,s)+u^m(x,s)\Delta\varphi(x,s)+|x|^{\sigma}u^p(x,s)\right)\,dx\,ds\\
&-\int_0^t\int_{\partial\Omega}u^m(x,s)\partial_{n}\varphi(x,s)\,dS\,ds,
\end{split}
\end{equation}
for any $t\in(0,T)$. We say that $u$ is a weak supersolution to the Cauchy problem \eqref{eq1}-\eqref{ic} if we replace equality by the sign $\geq$ in \eqref{weaksol} and we say that $u$ is a weak subsolution if we replace equality by the sign $\leq$ in \eqref{weaksol}.
\end{definition}
We enumerate below our main results.

\medskip

Our first theorem establishes sharp classes of initial conditions $u_0(x)$ such that the Cauchy problem \eqref{eq1}-\eqref{ic} admits or does not admit a weak solution in $Q_T$ for some $T>0$. Such a solution will be called \emph{local in time}, since it is a priori defined on some finite time interval $(0,T)$.
\begin{theorem}[Sharp existence and non-existence]\label{th.exist}
Let $u_0$ be as in \eqref{icond}. Then:

(a) If there exist some $K>0$ and some $R>0$ such that
\begin{equation}\label{decay}
u_0(x)\leq K|x|^{-\sigma/(p-1)}, \qquad {\rm for \ any} \ x\in\real^N, \ |x|\geq R,
\end{equation}
then there exist $T>0$ and a solution $u$ to the Cauchy problem \eqref{eq1}-\eqref{ic} defined and bounded in $Q_T$.

(b) On the contrary, if
\begin{equation}\label{not.decay}
\liminf\limits_{|x|\to\infty}|x|^{\sigma/(p-1)}u_0(x)=+\infty,
\end{equation}
then the Cauchy problem \eqref{eq1}-\eqref{icond} does not admit any (local in time) solution.
\end{theorem}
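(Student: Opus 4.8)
The proof rests on a single structural observation that fixes the critical exponent: at a point with $|x|=r$, the reaction term alone, governed by $\partial_t u = r^\sigma u^p$, produces blow-up at time $t_\ast(r)=u_0^{1-p}/((p-1)r^\sigma)$, and this pointwise blow-up time stays bounded below by a positive constant, uniformly in $r$, \emph{precisely} when $u_0(x)\lesssim |x|^{-\sigma/(p-1)}$. Both parts of the theorem turn this heuristic into a rigorous statement by comparison, so the proof is organized around constructing, in each case, an explicit barrier and comparing it with the (approximate) solution; the only comparison I rely on is the classical one for uniformly parabolic, bounded-domain approximations, reserving the general weak comparison principle to the later section.

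For part (a), the plan is: construct a supersolution $\bar u$ that is finite on $Q_T$ for some $T=T(K)>0$ and satisfies $\bar u(\cdot,0)\ge u_0$; solve a family of approximate problems (on balls $B_n$ with zero boundary data, or with the truncated weight $\min(|x|^\sigma,n)$), whose well-posedness is classical; dominate each approximation by $\bar u$ via comparison; and pass to the limit using interior regularity estimates for degenerate parabolic equations together with the uniform bound inherited from $\bar u$. The natural barrier is the weighted-ODE profile
\[
\bar u(x,t)=\bigl[h(x)-(p-1)|x|^\sigma t\bigr]^{-1/(p-1)},
\]
for which a direct computation gives $\partial_t\bar u=|x|^\sigma\bar u^p$. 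Choosing $h$ with $(p-1)T|x|^\sigma<h(x)\le u_0(x)^{1-p}$ makes $\bar u$ finite on $(0,T)$ and above $u_0$, and the decay assumption \eqref{decay} is exactly what renders these two constraints compatible once $T$ is small. The remaining, and genuinely delicate, point is that $\partial_t\bar u=|x|^\sigma\bar u^p$ yields a supersolution of the \emph{full} equation only if $\Delta\bar u^m\le 0$; one must therefore either verify this superharmonicity for a suitable $h$ or, more robustly, create slack by taking $\partial_t\bar u=\theta|x|^\sigma\bar u^p$ with $\theta>1$ and absorbing $\Delta\bar u^m$ into the surplus $(\theta-1)|x|^\sigma\bar u^p$. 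Alternatively one can simply invoke a self-similar supersolution from the classification of \cite{ILS23}, whose far-field decay is $|x|^{-\sigma/(p-1)}$, and place it (rescaled, and shifted in time) above $u_0$.

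For part (b), the plan is a contradiction argument via a blowing-up subsolution localized far from the origin. First, the hypothesis \eqref{not.decay} upgrades to a uniform exterior lower bound: for every $M>0$ there is $R_M$ with $u_0(x)\ge M|x|^{-\sigma/(p-1)}$ for $|x|\ge R_M$. Suppose a solution $u$ existed on $Q_T$. Fix $x_j$ with $|x_j|$ large and a ball $B=B_\rho(x_j)$ on which $|x|^\sigma\ge V_j:=(|x_j|-\rho)^\sigma$; on $B$ the frozen, translation-invariant equation $\partial_t w=\Delta w^m+V_j w^p$ with zero Dirichlet data is a subsolution of \eqref{eq1}, so $u\ge w$ by the classical comparison on $B$. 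The amplitude rescaling $w=V_j^{-1/(p-1)}\tilde w$ turns this into $\partial_t\tilde w=\varepsilon_j\Delta\tilde w^m+\tilde w^p$ with $\varepsilon_j=V_j^{-(m-1)/(p-1)}\to0$ (here $m>p$ is essential, since it forces the rescaled diffusion to vanish), while the lower bound on $u_0$ becomes $\tilde w(\cdot,0)\gtrsim M$. For this nearly diffusionless constant-coefficient reaction equation, large data force blow-up in a time tending to $0$ as $M\to\infty$, which one sees by comparison with a compactly supported self-similar blow-up subsolution of the constant-coefficient problem (again supplied by the classification). Choosing $M$ large enough makes the blow-up time smaller than $T$, contradicting the existence of $u$ on $Q_T$.

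The main obstacle in part (a) is the supersolution construction itself: because diffusion can \emph{add} to the growth, the pointwise ODE intuition is not self-justifying, and controlling $\Delta\bar u^m$ (either proving superharmonicity or engineering enough slack, uniformly for all $|x|$ including the transition near $|x|\sim1$) is the crux. In part (b), the obstacle is quantitative: one must show that the frozen-coefficient subsolution blows up in a time tending to $0$ \emph{uniformly}, so as to beat simultaneously the vanishing but nonzero diffusion $\varepsilon_j\Delta\tilde w^m$, the boundary losses through $\partial B$, and any prescribed $T>0$; the favorable sign of $m-p$ is exactly what makes this possible.
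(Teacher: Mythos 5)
Your part (a) follows the same overall scheme as the paper: monotone approximation by Dirichlet problems on balls, dominated by a supersolution, with the limit taken via interior regularity (this is the paper's Lemma \ref{lem.approx}, borrowed from \cite{Su02}). The difference is the supersolution itself, and that is where your proposal has a genuine gap. The paper constructs nothing explicit: it observes that \eqref{decay} makes the weighted mean-value norm $\vertiii{u_0}$ of Andreucci--DiBenedetto finite, invokes their existence theorem for the equation with weight $(1+|x|)^{\sigma}\geq|x|^{\sigma}$, and uses that solution as the supersolution. Your explicit barrier $\bar u=[h(x)-(p-1)|x|^{\sigma}t]^{-1/(p-1)}$ is left unverified at exactly the point you yourself call the crux, and the proposed repair (slack $\theta>1$ absorbing $\Delta\bar u^m$) fails near the origin for a generic admissible $h$: there the surplus $(\theta-1)|x|^{\sigma}\bar u^{p}$ vanishes like $|x|^{\sigma}$, while for, say, $h$ constant near $x=0$ one computes for $t>0$ that $\Delta\bar u^{m}$ is \emph{strictly positive} and of order $|x|^{\sigma-2}$ (the barrier is locally increasing in $|x|$, hence $\bar u^m$ is subharmonic there), so no finite $\theta$ absorbs it. The construction can likely be salvaged only by a specific choice of $h$ near the origin (of the form $a+C|x|^{\sigma}$, for which superharmonicity does hold locally), plus absorption at infinity and a large-$\theta$ argument on intermediate annuli --- but that verification is precisely what is missing, so part (a) is incomplete at its central step. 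Your alternative (a self-similar supersolution with tail $|x|^{-\sigma/(p-1)}$) is also unsubstantiated: the profiles guaranteed by \cite{IS21,ILS23} in this range are compactly supported, and the paper's own Lemma \ref{lem.Q1} shows profiles flat at the origin initially \emph{increase}, so the monotone, everywhere-positive tailed profile you want is not known to exist.

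Part (b) is not a gap but a strategy that provably cannot work. Because $p<m$, the frozen Dirichlet problem $w_t=\Delta w^m+V_jw^p$ on a ball \emph{never blows up}, for any bounded datum: if $W$ solves the sublinear stationary problem $\Delta W^m+V_jW^p=0$ with zero data on a slightly larger ball, then $\lambda W$ is a supersolution for every $\lambda\geq1$ (since $\lambda^p\leq\lambda^m$), is positive on the closure of your ball, and dominates any bounded datum for $\lambda$ large; comparison then keeps $w$ bounded for all time. In your rescaled variables, the stationary threshold is of order $(\rho^2/\varepsilon_j)^{1/(m-p)}\to\infty$, whereas \eqref{not.decay} gives only $M_j\to\infty$ with \emph{no rate}, so $\tilde w_0\sim M_j$ eventually lies far below the threshold: the sign $m>p$ that you call essential is exactly what kills the localized argument (it makes the bounded-domain problem sublinear, hence globally solvable). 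For the same reason there is no ``compactly supported self-similar blow-up subsolution of the constant-coefficient problem'' confined to a fixed ball when $p<m$; blow-up for that equation forces the support to spread, consistent with the paper's Theorem \ref{th.loc}. No tuning of $M$, $\rho$, $x_j$ can fix this, since the hypothesis carries no growth rate. The paper's actual proof is of a completely different nature and avoids comparison altogether: a Baras--Kersner integral argument, pairing the weak formulation with test functions $\psi(x)=\eta(x/\lambda)$ supported in annuli $\{\lambda\leq|x|\leq2\lambda\}$, using $u^m\leq M(T)^{m-1}u$ and optimizing out $u$, which yields $\liminf_{|x|\to\infty}|x|^{\sigma/(p-1)}u_0(x)\leq[(p-1)T]^{-1/p}<\infty$, directly contradicting \eqref{not.decay}; the $M(T)$-dependent term carries a factor $\lambda^{-2}$ and disappears in the limit. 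You would need to replace your part (b) entirely by an argument of this kind.
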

This theorem is practically optimal, as it gives the threshold (in terms of decay as $|x|\to\infty$) of the initial condition in order to ensure (at least for a short time) the existence of a solution to the Cauchy problem. Let us remark at this point that the decay \eqref{decay} as $|x|\to\infty$ has appeared in our previous papers devoted to self-similar solutions as a limiting behavior and as the unique possible tail behavior for self-similar profiles, suggesting heuristically that there might be no possible solution with a slower decay rate. The analysis we perform in this work confirms this limiting property of it.

The next result addresses the other very basic question that one can formulate with respect to the Cauchy problem, that is, the uniqueness of a solution (once we know when it exists). This does not follows directly from standard theory of parabolic equations, since the coefficient $|x|^{\sigma}$ is not bounded as $|x|\to\infty$ and thus it requires an ad-hoc proof.
\begin{theorem}[Uniqueness and comparison principle]\label{th.uniq}
Let $T>0$ and let $u$, $v$ to be respectively a subsolution and a supersolution to Eq. \eqref{eq1} in $Q_T$ such that $u(x,0)\leq v(x,0)$ for any $x\in\real^N$. Assume that there exist a continuous function $M:(0,T)\mapsto(0,\infty)$ and $R>0$ sufficiently large such that
\begin{equation}\label{bound}
\sup\limits_{s\in(0,t)}\{u(x,s),v(x,s)\}\leq M(t)|x|^{-\sigma/(p-1)},
\end{equation}
for any $t\in(0,T)$ and $x\in\real^N$ with $|x|\geq R$. Then $u\leq v$ in $Q_T$. In particular, if $u_0$ is an initial conditions satisfying \eqref{icond} and \eqref{decay}, then there exists a unique solution to the Cauchy problem \eqref{eq1}-\eqref{ic}.
\end{theorem}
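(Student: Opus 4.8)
The plan is to linearise the difference of the two weak (in)equalities and to close the argument by the duality method of Oleinik, exploiting the weighted decay \eqref{bound} twice: once to turn the reaction term into a \emph{bounded} zeroth-order coefficient, and once to kill the far-field contributions on the unbounded domain $\real^N$. I first set $w:=u-v$ and introduce the nonnegative secant coefficients
\[
a:=\frac{u^m-v^m}{u-v},\qquad b:=\frac{u^p-v^p}{u-v}
\]
(extended by the respective derivatives on $\{u=v\}$), both $\ge0$ since $r\mapsto r^m$ and $r\mapsto r^p$ are increasing on $[0,\infty)$. By the mean value theorem $a=m\xi^{m-1}$ and $b=p\xi^{p-1}$ for some $\xi$ between $u$ and $v$, so $0\le\xi\le M(t)|x|^{-\sigma/(p-1)}$ for $|x|\ge R$ by \eqref{bound}. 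The crucial consequence is that the \emph{weighted} coefficient $c:=|x|^{\sigma}b\le p\,|x|^{\sigma}\xi^{p-1}\le p\,M(t)^{p-1}$ is bounded for $|x|\ge R$ (and bounded on $\{|x|\le R\}$ by the local boundedness of $u,v$); this is precisely the reason the exponent $-\sigma/(p-1)$ is sharp. Hence $w$ satisfies, in the weak sense, the \emph{linear} differential inequality $\pa_sw\le\D(aw)+cw$ with $a\ge0$ bounded and $c\ge0$ bounded.

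Subtracting the supersolution inequality from the subsolution inequality in \eqref{weaksol} on a ball $B_\rho$, tested against a nonnegative $\varphi$ with $\varphi=0$ on $\pa B_\rho$, and using $u(\cdot,0)\le v(\cdot,0)$ together with the substitutions $u^m-v^m=aw$ and $|x|^\sigma(u^p-v^p)=cw$, I obtain the fundamental inequality
\[
\int_{B_\rho}w(x,t)\varphi(x,t)\dx\le\int_0^t\!\!\int_{B_\rho}w\big(\pa_s\varphi+a\D\varphi+c\varphi\big)\dx\ds-\int_0^t\!\!\int_{\pa B_\rho}(u^m-v^m)\pa_n\varphi\,dS\ds.
\]
Given a nonnegative terminal datum $\theta\in C_c^\infty(B_{\rho/2})$, I take $\varphi\ge0$ to be the solution of the \emph{regularised backward adjoint problem} $\pa_s\varphi+a_\e\D\varphi+c\varphi=0$ in $B_\rho\times(0,t)$, with $\varphi(\cdot,t)=\theta$ and $\varphi=0$ on $\pa B_\rho$, where $a_\e:=a+\e$ renders it uniformly parabolic. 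The core is the Oleinik estimate $\int_0^t\!\int_{B_\rho}a_\e(\D\varphi)^2\dx\ds\le C$, uniform in $\e$, obtained by multiplying the adjoint equation by $\D\varphi$ and integrating by parts. It forces the residual $\int_0^t\!\int_{B_\rho}w(a-a_\e)\D\varphi\dx\ds$ to vanish as $\e\to0$: by Cauchy--Schwarz with weight $a_\e$ and $(a-a_\e)^2/a_\e=\e^2/(a+\e)\le\e$, it is bounded by $C\,\|w\|_\infty\,\e^{1/2}$. In the limit the interior term drops and one is left with $\int_{B_\rho}w(\cdot,t)\,\theta\dx\le-\int_0^t\!\int_{\pa B_\rho}(u^m-v^m)\pa_n\varphi\,dS\ds$.

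It then remains to let $\rho\to\infty$, where \eqref{bound} is used a second time: on $\pa B_\rho$ one has $|u^m-v^m|\le C\,M(t)^m\rho^{-\sigma m/(p-1)}\to0$, while the maximum principle bounds $\|\varphi\|_\infty$ and standard flux estimates control $\int_0^t\!\int_{\pa B_\rho}|\pa_n\varphi|\,dS\ds$, so the boundary term vanishes. This yields $\int_{\real^N}w(\cdot,t)\,\theta\dx\le0$ for every $\theta\ge0$, hence $w\le0$, i.e. $u\le v$ on $Q_T$. For the uniqueness assertion, any solution issued from $u_0$ satisfying \eqref{icond}--\eqref{decay} obeys \eqref{bound}, the construction of Theorem~\ref{th.exist} trapping it between sub- and supersolutions of tail order $|x|^{-\sigma/(p-1)}$; two such solutions are then mutually sub- and supersolutions, so the comparison just proved gives $u\le v$ and $v\le u$.

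The main obstacle I anticipate is the interaction of the two degeneracies. The Oleinik regularisation must be uniform precisely where $a\to0$, which for this equation includes the \emph{entire far field} (since $a=m\xi^{m-1}\to0$ as $|x|\to\infty$), while the unbounded weight requires every far-field bound to be quantitative; reconciling the $\e\to0$ and $\rho\to\infty$ limits so that the boundary flux genuinely vanishes — rather than the adjoint solution concentrating mass near $\pa B_\rho$ — is the delicate point, and it is exactly the decay rate $-\sigma/(p-1)$ that makes it work.
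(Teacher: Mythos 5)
Your reduction is the right one, and it isolates exactly the same key fact on which the paper's proof turns: thanks to \eqref{bound}, the weighted secant coefficient $c=|x|^{\sigma}(u^p-v^p)/(u-v)\le p\,M(t)^{p-1}$ is bounded, so $w=u-v$ satisfies a linear inequality with a bounded zeroth-order coefficient. (The paper exploits this same fact differently: it multiplies by ${\rm sign}_+(u-v)$, applies Kato's inequality, gets $\frac{d}{dt}\|(u-v)_+\|_{L^1}\le 2pM(t)^{p-1}\|(u-v)_+\|_{L^1}$, and concludes by Gronwall, delegating the rigorous cut-off version to \cite{AdB91}.) However, the step you yourself call the core of your duality argument --- the uniform-in-$\varepsilon$ Oleinik estimate $\int_0^t\int_{B_\rho}a_\varepsilon(\Delta\varphi)^2\,dx\,ds\le C$ --- has a genuine gap, precisely because you placed the zeroth-order term $c\varphi$ \emph{inside} the adjoint equation. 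Multiplying $\partial_s\varphi+a_\varepsilon\Delta\varphi+c\varphi=0$ by $\Delta\varphi$ and integrating by parts leaves the cross term
\begin{equation*}
\int_0^t\!\!\int_{B_\rho}a_\varepsilon(\Delta\varphi)^2\,dx\,ds\le\frac12\int_{B_\rho}|\nabla\theta|^2\,dx+\Big|\int_0^t\!\!\int_{B_\rho}c\,\varphi\,\Delta\varphi\,dx\,ds\Big|,
\end{equation*}
and this cross term cannot be absorbed uniformly in $\varepsilon$: Young's inequality with weight $a_\varepsilon$ costs $\int\!\int c^2\varphi^2/a_\varepsilon$, which is of size $1/\varepsilon$, because $c$ does \emph{not} vanish where $a$ does. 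This is forced by the regime $p<m$: in the tail region, where $u,v$ are of order $|x|^{-\sigma/(p-1)}$ (the self-similar solutions of the paper do realize this), one has $a\sim m M^{m-1}|x|^{-\sigma(m-1)/(p-1)}\to0$ while $c\sim pM^{p-1}$ stays of order one, so $c\not\lesssim a$ and on a set of large measure $c^2/a_\varepsilon\sim 1/\varepsilon$. Integrating by parts instead produces $\int\!\int c|\nabla\varphi|^2$, which is equally uncontrolled, since the adjoint energy estimate only bounds $\int\!\int a_\varepsilon|\nabla\varphi|^2$. The resulting bound $\int\!\int a_\varepsilon(\Delta\varphi)^2\lesssim 1/\varepsilon$ makes your residual term $\lesssim\|w\|_\infty\,\varepsilon^{1/2}\cdot\varepsilon^{-1/2}=O(1)$, so the limit $\varepsilon\to0$ does not close. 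Note that this failure occurs on each fixed ball; it is not the $\varepsilon\to0$ versus $\rho\to\infty$ interaction you flagged as the delicate point.

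The standard repair is to take the zeroth-order term \emph{out} of the adjoint problem: solve $\partial_s\varphi+a_\varepsilon\Delta\varphi=0$ (for which the clean identity $\int\!\int a_\varepsilon(\Delta\varphi)^2\le\frac12\int|\nabla\theta|^2$ does hold), keep $\int\!\int c\,w\,\varphi$ as a source on the right-hand side, bound it by $\|c\|_\infty\|\theta\|_\infty\int_0^t\!\int w_+$ using $0\le\varphi\le\|\theta\|_\infty$ (maximum principle), and close with Gronwall after taking the supremum over admissible $\theta$. Once repaired this way, your argument is in substance the paper's proof: bounded zeroth-order coefficient plus an $L^1$-type Gronwall inequality, with the duality machinery playing the role of the paper's Kato inequality. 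Two further points deserve care: (i) $a_\varepsilon=a+\varepsilon$ is not smooth, so the adjoint problem requires mollified coefficients, the error being measured by $\|(a-a_\varepsilon)/\sqrt{a_\varepsilon}\|_{L^2}$ exactly as in your Cauchy--Schwarz step; (ii) your ``standard flux estimates'' for the boundary term should be made explicit: what works, uniformly in $\varepsilon$, is an exponential supersolution of the form $C\,e^{\|c\|_\infty(t-s)+\lambda^2A(t-s)}e^{-\lambda(|x|-r_0)}$ (depending only on the upper bound $A$ of $a_\varepsilon$, not on its lower bound), which makes $\varphi$ and $|\partial_n\varphi|$ exponentially small on $\partial B_\rho$; the crude barrier bound $|\partial_n\varphi|\lesssim\|\theta\|_\infty/\rho$ would only give a boundary contribution of order $\rho^{N-2-\sigma m/(p-1)}$, which need not vanish for all admissible $N,\sigma,p,m$.
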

A comparison principle becomes a powerful tool in conjunction with the classification of subsolutions and supersolutions in self-similar form, allowing us to obtain some new properties of solutions. One of them is the fact that any non-trivial solution to Eq. \eqref{eq1} blows up in finite time.
\begin{theorem}[Finite time blow-up]\label{th.BU}
Let $u_0$ be an initial condition as in \eqref{icond} and satisfying \eqref{decay}. Then the unique solution to the Cauchy problem \eqref{eq1}-\eqref{ic} blows up in a finite time $T\in(0,\infty)$ in the sense that $u(t)\in L^{\infty}(\real^N)$ for any $t\in(0,T)$, but $u(T)$ becomes unbounded. The same holds true for $p=1$ and $\sigma>0$.
\end{theorem}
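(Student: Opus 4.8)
The plan is to deduce finite time blow-up from the comparison principle of Theorem~\ref{th.uniq} together with the existence of compactly supported self-similar subsolutions that blow up in finite time, available from the classification carried out in \cite{IS19, IS21, ILS23}. First I observe that there is nothing to prove unless the solution is bounded on a full interval $[0,t_0]$: if it were not, the blow-up time would already satisfy $T\le t_0<\infty$. Moreover $T>0$ is guaranteed by the local existence statement of Theorem~\ref{th.exist}(a), so the whole difficulty is to rule out $T=+\infty$, that is, to show that $\|u(t)\|_{\infty}$ cannot stay finite for all $t>0$.

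The first step is to produce a quantitative lower bound for $u$ at some interior time. Since the reaction term $|x|^{\sigma}u^p$ is non-negative, $u$ is a supersolution of the pure porous medium equation $\partial_tw=\Delta w^m$; comparing with the Barenblatt-type solution issued from a small bump placed inside the region where $u_0$ is positive (which is non-empty because $u_0\not\equiv0$ and $u_0\in C(\real^N)$), and using that the support of a solution of the porous medium equation expands without bound as $t$ grows, I obtain a time $t_0>0$, a radius $\rho>0$ and a constant $c_0>0$ such that $u(x,t_0)\ge c_0$ for all $|x|\le\rho$. In particular the origin lies in the interior of the positivity set of $u(\cdot,t_0)$.

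Next I select, from the classification of self-similar subsolutions, a compactly supported backward self-similar subsolution $\underline u(x,t)=(\tilde T-t)^{-\alpha}f\big(|x|(\tilde T-t)^{\beta}\big)$ of Eq.~\eqref{eq1} that blows up at a finite time $\tilde T$. Exploiting the freedom in the parameters of this family (the blow-up time $\tilde T$ and the scaling of the profile), I fix $\tilde T$ so large compared with $t_0$ that, at $t=t_0$, both the amplitude $(\tilde T-t_0)^{-\alpha}$ of $\underline u$ is at most $c_0$ and its support is contained in $\{|x|\le\rho\}$. This yields $\underline u(\cdot,t_0)\le u(\cdot,t_0)$ on all of $\real^N$. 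Since $\underline u$ is compactly supported for each $t<\tilde T$ it trivially satisfies the bound \eqref{bound}, and $u$ belongs to the uniqueness class, so Theorem~\ref{th.uniq} applies on the interval $(t_0,\min\{T,\tilde T\})$ and gives $u\ge\underline u$ there. As $t\uparrow\tilde T$ we have $\|\underline u(t)\|_{\infty}=(\tilde T-t)^{-\alpha}\to\infty$, whence $\|u(t)\|_{\infty}\to\infty$; this is incompatible with $u$ being bounded beyond $\tilde T$, so $T\le\tilde T<\infty$, and together with $T>0$ this proves $T\in(0,\infty)$.

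The main obstacle is the interplay between the second and third steps: the self-similar subsolutions furnished by the classification are radial and centered at the origin, so a naive translation is forbidden by the inhomogeneity $|x|^{\sigma}$, and one must genuinely wait until the porous medium spreading pushes the positivity set of $u$ over the origin before a centered subsolution can be slipped underneath. Verifying that the two free parameters of the self-similar family can simultaneously make the amplitude and the support small at the comparison time (i.e. checking the relevant signs of $\alpha$ and $\beta$, or invoking an additional amplitude rescaling) is the quantitative heart of the argument. For the limiting case $p=1$ the reaction $|x|^{\sigma}u$ is linear and the very same scheme applies verbatim, using the corresponding compactly supported self-similar blow-up subsolution from the classification.
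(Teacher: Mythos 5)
Your proposal is correct and follows essentially the same strategy as the paper's proof: use the fact that $u$ is a supersolution of the porous medium equation to guarantee, after some time $t_0$, a positive lower bound for $u$ on a ball around the origin, then slip underneath it a compactly supported backward self-similar subsolution from \cite{IS19, IS21, ILS23} whose free blow-up-time parameter $\tilde T$ is taken large (using $\alpha,\beta>0$) so that its amplitude and support fit inside that ball, and conclude by the comparison principle of Theorem \ref{th.uniq}. The only differences are presentational (the paper argues by contradiction with a global solution, and shifts the subsolution's time origin to $\tau_0$ rather than evaluating it at $t_0$), so there is nothing substantive to add.
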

At a first sight, this result might look as an expected one: indeed, $p\in[1,m)$ is in any case smaller than the Fujita exponent $p_{F}(m,\sigma)=m+(\sigma+2)/N$. But, as it was shown in some previous works, things are not always so easy: actually, in some neighboring cases, this is no longer true. If we let $\sigma\in(-2,0)$, it was recently established in \cite{IMS23} that for $p\in(1,p_*)$, with
$$
p_*=1-\frac{\sigma(m-1)}{2}\in(1,m),
$$
there are always global solutions. A similar fact holds true if the pure power weight $|x|^{\sigma}$ is replaced by a localized one: in dimension $N=1$, it has been established in \cite{FdPV06} that there exists a critical exponent $p_0=(m+1)/2\in(1,m)$ such that for any $p\in(1,p_0)$ every solution is global in time. Later, it was proved in \cite{Liang12} that in higher dimensions $N\geq2$, and with localized weighted source, for any $p\in(1,m)$, every solution is global in time. Moreover, notice that finite time blow-up of any non-trivial solution also holds true for $p=1$ and $\sigma>0$, in strong contrast with the homogeneous case $p=1$ and $\sigma=0$ where solutions are global in time.

We can also employ the comparison principle and our knowledge of self-similar solutions stemming from \cite{IS21, ILS23} in order to obtain some properties related to the support of general solutions with compactly supported initial conditions.
\begin{theorem}[Finite speed of propagation]\label{th.fsp}
Let $u_0$ be as in \eqref{icond} and such that ${\rm supp}\,u_0\subset B(0,R)$ for some $R>0$. Let $u$ be the unique solution to the Cauchy problem \eqref{eq1}-\eqref{ic} and $T\in(0,\infty)$ its finite blow-up time. Then, for any $t\in(0,T)$, there exists $R(t)<\infty$ such that ${\rm supp}\,u(t)\subset B(0,R(t))$ for any $t\in(0,T)$.
\end{theorem}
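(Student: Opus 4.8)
The plan is to dominate $u$ from above, on each time slab $[0,t_0]$ with $t_0\in(0,T)$, by a compactly supported supersolution and then invoke the comparison principle of Theorem~\ref{th.uniq}. Since $T$ is the blow-up time, $u$ is uniformly bounded on $[0,t_0]$, say $\|u(t)\|_{\infty}\le M$ for $t\in[0,t_0]$; this, together with the compact support of $u_0$, guarantees that $u$ satisfies the decay bound \eqref{bound} required to apply the comparison principle against a compactly supported competitor. Hence, once we produce a supersolution $\overline u$ with $\overline u(\cdot,0)\ge u_0$ and ${\rm supp}\,\overline u(t)$ bounded for $t\in[0,t_0]$, Theorem~\ref{th.uniq} yields $u\le\overline u$ and therefore ${\rm supp}\,u(t)\subset{\rm supp}\,\overline u(t)\subset B(0,R(t))$.

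For the competitor I would use the compactly supported self-similar supersolutions furnished by the classification in \cite{IS21, ILS23}. Looking for a blow-up profile of the form
\begin{equation*}
\overline u(x,t)=(T^{*}-t)^{-\alpha}f\bigl(|x|(T^{*}-t)^{\beta}\bigr),
\end{equation*}
and balancing the three terms of Eq.~\eqref{eq1} forces
\begin{equation*}
\beta=\frac{m-p}{2(p-1)+\sigma(m-1)}>0,\qquad \alpha=\frac{1+2\beta}{m-1}>0 .
\end{equation*}
Since the admissible profiles $f$ vanish for $\xi\ge\xi_0$, the support of $\overline u(t)$ is exactly $B\bigl(0,\xi_0(T^{*}-t)^{-\beta}\bigr)$, which is finite for every $t<T^{*}$ and expands to the whole space as $t\uparrow T^{*}$ (consistently with the companion absence-of-localization statement, so that one can only expect a bound $R(t)$ depending on $t$, never a uniform one). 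The remaining freedom --- the blow-up time $T^{*}$ (the equation being autonomous) together with the shooting parameter $f(0)$ selecting the profile --- is used to arrange $\overline u(\cdot,0)\ge u_0$, which is possible because $u_0$ is bounded with compact support.

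The main obstacle is matching the life-span of the competitor with the target time $t_0$. Note first that any compactly supported supersolution dominating the nontrivial datum $u_0$ must itself blow up in finite time $T^{*}\le T$: otherwise $u\le\overline u$ would remain bounded for all time, contradicting Theorem~\ref{th.BU}. Thus a single self-similar majorant controls the support only on $[0,T^{*})$, and if $T^{*}<t_0$ a gap remains. I would close it by continuation in time: at an interior instant $t_1<\min\{t_0,T^{*}\}$ the function $u(t_1)$ is bounded and compactly supported, hence qualifies as a new initial datum to which the same construction applies, producing a fresh compactly supported majorant on $[t_1,t_1+\delta_1)$ with $\delta_1>0$; iterating yields support control on an increasing sequence of times $t_k$. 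The delicate point --- and the crux of the whole argument --- is to show this sequence exhausts $[0,T)$, i.e. that the increments cannot accumulate at some $t_\infty<T$; this follows from the uniform boundedness of $u$ on compact subintervals of $[0,T)$, which keeps the life-span of each new majorant bounded away from zero as long as $t_k$ stays away from $T$. An alternative, avoiding the continuation, would be to replace $|x|^{\sigma}$ by $\rho(t)^{\sigma}$ on the (bounded) support $B(0,\rho(t))$ of the competitor and build directly a Barenblatt-type supersolution of the resulting power-source porous medium equation, for which finite speed of propagation for $p\ge1$ is classical (cf. \cite{S4}); the price is an ODE for $\rho(t)$ encoding the feedback between support size and reaction strength that one must keep finite up to $t_0$.
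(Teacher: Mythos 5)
Your overall strategy coincides with the paper's: dominate $u$ by a compactly supported backward self-similar supersolution (your exponents are correct and agree with \eqref{exp.SS}) and invoke the comparison principle of Theorem \ref{th.uniq}. The genuine gap is in the step you treat as ``furnished by the classification''. For $N\geq 2$ there is no profile of the kind you posit --- compactly supported, bounded at the origin, with $f(0)$ available as a shooting parameter. Indeed, any solution of \eqref{SSODE} with $f(0)=a>0$, $f'(0)=0$ satisfies $(f^m)''(0)=\alpha a/N>0$ and is therefore \emph{increasing} near $\xi=0$ (Lemma \ref{lem.Q1}), while the decreasing profiles possessing an interface necessarily have a vertical asymptote at the origin, $f(\xi)\sim D\xi^{-(N-2)/m}$ (logarithmic if $N=2$), by Lemma \ref{lem.interf}; moreover, the compactly supported self-similar profiles used in Section \ref{sec.BU} are supported in annuli $[\xi_1,\xi_2]\subset(0,\infty)$, hence cannot dominate data positive at the origin. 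This obstruction is exactly why the paper must glue: it takes $f=\min\{f_1,f_2\}$, with $f_1$ the increasing profile of Lemma \ref{lem.Q1} normalized by $f_1(0)=\|u(t_0)\|_{\infty}$ and $f_2$ a decreasing interface profile of Lemma \ref{lem.interf}; the pointwise minimum of these two solutions is a compactly supported \emph{supersolution}, positive at the origin, and the free parameter $\tau$ is then chosen small so that \eqref{cond.supp} and \eqref{cond.height} hold simultaneously. Your proposal is missing this construction, which is the actual content of the paper's proof; only in $N=1$ does a single profile suffice, namely one with $f_2(0)=A>0$, $f_2'(0)<0$ as in \eqref{neg.slope}.

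Second, your resolution of the continuation issue --- which you rightly call the crux --- does not work as stated. For a backward self-similar barrier the height and the support radius are rigidly linked through the single free parameter: at the restart time they scale as $\tau^{-\alpha}$ and $\xi_0\tau^{-\beta}$ respectively. Hence the admissible life-span at time $t_k$ satisfies $\tau_k\lesssim(\xi_0/\zeta(t_k))^{1/\beta}$, where $\zeta(t_k)$ is the support radius, \emph{regardless} of how small $\|u(t_k)\|_{\infty}$ is; it degenerates as the support grows, and controlling the support is precisely what is being proved, so ``uniform boundedness of $u$ keeps the life-span bounded away from zero'' is circular. Worse, the comparison on $(t_k,t_k+\tau_k)$ only yields $\zeta(t)\leq\xi_0(\tau_k+t_k-t)^{-\beta}$, a bound degenerating at the right endpoint, and the resulting bookkeeping (time gained per step of order $\zeta_k^{-1/\beta}$, support multiplied by a fixed factor) gives summable time increments --- so this iteration alone is consistent with the support blowing up at some $t^*<T$, and excluding that scenario requires an additional argument beyond the $L^\infty$ bound. (The paper is admittedly terse on this same point, recording only the one-step propagation and the arbitrariness of $t_0$, but it does not rest on your incorrect quantitative claim.) A minor further remark: boundedness of $u$ on $[0,t_0]$ does not by itself imply the spatial decay \eqref{bound} needed to invoke Theorem \ref{th.uniq}; that decay has to be inherited from the construction of $u$ in Theorem \ref{th.exist}.
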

This result opens up for another interesting question: is the support of at least some non-trivial solutions localized? That is, if $T>0$ is the blow-up time of a solution to the Cauchy problem \eqref{eq1}-\eqref{ic} (which exists according to Theorem \ref{th.BU}), is it true that there exists a sufficiently large $R>0$ such that ${\rm supp}\,u(t)\subset B(0,R)$ for any $t\in[0,T]$? An inspection of the self-similar solutions constructed in \cite{ILS23} suggests that this assertion should be false, and indeed, our last result states it in general.
\begin{theorem}[Absence of localization]\label{th.loc}
Let $u$ be a solution to the Cauchy problem \eqref{eq1}-\eqref{ic} with an initial condition $u_0$ satisfying \eqref{icond} and with compact support and having a blow-up time $T\in(0,\infty)$. Let
$$
\zeta(t):=\sup\{|x|: u(x,t)>0\}, \qquad t\in(0,T),
$$
be the edge of the support of $u(t)$. Then $\lim\limits_{t\to T}\zeta(t)=\infty$.
\end{theorem}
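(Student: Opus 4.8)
The plan is to argue by contradiction: assuming the edge $\zeta(t)$ stays bounded as $t\to T$, I will show that $u$ cannot in fact blow up at $T$. Two preliminary reductions come first. By Theorem \ref{th.fsp}, $\zeta(t)<\infty$ for every $t\in(0,T)$, so only the behaviour near $T$ is in question. Moreover, since the source term is nonnegative, $u$ is a supersolution of the porous medium equation $\partial_t u=\Delta u^m$; comparing on each interval $[t_1,T)$ with the porous medium flow issued from $u(t_1)$, whose positivity set is nondecreasing in time, shows that the positivity set of $u$ is nondecreasing and hence that $\zeta$ is nondecreasing. Consequently $L:=\lim_{t\to T}\zeta(t)=\sup_{t\in(0,T)}\zeta(t)$ exists in $(0,\infty]$, and it suffices to exclude $L<\infty$. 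So I assume $L<\infty$ and set $R_0:=L$, so that $\mathrm{supp}\,u(t)\subset \overline{B(0,R_0)}$ for all $t\in(0,T)$.

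Under this confinement the weight seen by the solution is bounded, $|x|^\sigma\le R_0^\sigma$ on $\mathrm{supp}\,u(t)$, and this yields a sharp lower bound on the blow-up rate. Writing $M(t):=\|u(t)\|_\infty$, the maximum is attained at some $x_t\in\overline{B(0,R_0)}$; since $\Delta u^m\le0$ at a spatial maximum of $u$, the standard differential inequality for the sup-norm gives
\begin{equation}\label{eq.Mineq}
M'(t)\le |x_t|^\sigma M(t)^p\le R_0^\sigma M(t)^p,\qquad t\in(0,T).
\end{equation}
Because $M(t)\to\infty$ as $t\to T$, integrating \eqref{eq.Mineq} between $t$ and $T$ and using $M^{1-p}(T)=0$ produces
\begin{equation}\label{eq.rate}
M(t)\ge \big[(p-1)R_0^\sigma\,(T-t)\big]^{-1/(p-1)},\qquad t\in(0,T).
\end{equation}
Thus, under the localization assumption, $u$ must blow up at least at the spatially homogeneous (ODE) rate.

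The decisive step is to show that this amplitude growth is incompatible with the confinement, because porous medium diffusion propagates a tall profile with a speed that grows with its height. Discarding the nonnegative source once more, $u$ dominates the porous medium flow; its pressure $v\sim u^{m-1}$ is of order $M(t)^{m-1}$ and, by confinement, varies over a spatial scale of at most $2R_0$, so the free boundary is driven outward with a normal speed comparable to $|\nabla v|\gtrsim R_0^{-1}M(t)^{m-1}$. The accumulated displacement of the interface over $(t_1,T)$ is therefore at least of order $R_0^{-1}\int_{t_1}^{T}M(t)^{m-1}\,dt$, and by \eqref{eq.rate} together with $m>p$ (so that $(m-1)/(p-1)>1$ and the integrand $(T-t)^{-(m-1)/(p-1)}$ is non-integrable near $T$) this displacement diverges. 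Hence the support of $u$ must leave $\overline{B(0,R_0)}$ before $T$, contradicting the confinement. Therefore $L=\infty$, i.e. $\lim_{t\to T}\zeta(t)=\infty$.

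The crux, and the step requiring the most care, is turning this mechanism into a rigorous comparison: to insert a Barenblatt-type subsolution underneath $u$ one needs a lower bound for $u$ not merely at its peak but on a ball of non-shrinking (or controllably shrinking) radius. I would obtain such a bound either from an intrinsic Harnack inequality for solutions of the porous medium equation with bounded reaction, or, in the spirit of the rest of the paper, by fitting beneath $u$ one of the compactly supported self-similar subsolutions classified in \cite{IS21, ILS23}, whose peak rate $(T-t)^{-1/(p-1)}$ matches \eqref{eq.rate} and whose support expands to fill $\real^N$ as $t\to T$; the free-boundary analysis near $\partial B(0,R_0)$ and the tracking of the subsolution's interface up to time $T$ are the technical heart of the argument. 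A conceptually simpler but less general alternative, available when the localized problem is known to be globally solvable (e.g. for $N\ge2$ by \cite{Liang12}), is to note that under confinement $u$ solves the equation with the bounded, compactly supported weight $|x|^\sigma\chi_{B(0,R_0)}$ and would then be global, directly contradicting blow-up at $T$.
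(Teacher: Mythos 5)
Your reduction to a confined solution and the lower blow-up-rate estimate $M(t)\geq\left[(p-1)R_0^{\sigma}(T-t)\right]^{-1/(p-1)}$ are sound in outline (modulo the standard justification of the differential inequality for $t\mapsto M(t)$ for weak solutions). The genuine gap is the ``decisive step'': the claim that the free boundary is driven outward with normal speed comparable to $R_0^{-1}M(t)^{m-1}$ confuses the \emph{average} pressure gradient across the profile with the pressure gradient \emph{at the interface}. By Darcy's law the interface velocity is governed by the limit of $|\nabla u^{m-1}|$ from inside the positivity set at the free boundary, and nothing in your amplitude bound prevents the pressure from being arbitrarily flat there; waiting-time phenomena for the porous medium equation show precisely that large interior gradients can coexist with a stationary interface. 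Hence the divergence of $\int_{t_1}^{T}M(t)^{m-1}\,dt$ does not, by itself, produce any displacement of the support. You acknowledge this and propose to repair it via an intrinsic Harnack/expansion-of-positivity estimate, or by fitting one of the annular self-similar subsolutions of \cite{IS21,ILS23} beneath $u$ near $t=T$ (their supports $[\xi_1(T-t)^{-\beta},\xi_2(T-t)^{-\beta}]$ do escape to infinity, so the mechanism is right); but obtaining the required lower bound for $u$ on a region of definite size and verifying the ordering at the comparison time is exactly the technical heart, and it is not carried out. As written, the main route is a program, not a proof.

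Your fallback is much closer to a complete argument and is, in spirit, what the paper actually does -- under confinement the reaction coefficient seen by $u$ is bounded -- but invoking \cite{Liang12} has two defects. First, it is restricted to $N\geq2$, and in $N=1$ the route is not merely less general but unavailable for $p\in[(m+1)/2,m)$: by \cite{FdPV06}, localized-weight solutions there \emph{can} blow up, so confinement yields no contradiction. Second, the weight $|x|^{\sigma}\chi_{B(0,R_0)}$ is discontinuous and not literally of the class treated in \cite{Liang12}; you would need to dominate it by a continuous compactly supported weight and run a comparison between the two Cauchy problems. The paper instead makes the bounded-weight observation self-contained and dimension-independent: Lemma \ref{lem.stationary} constructs, by phase-plane analysis, bounded radial \emph{stationary} solutions $W_R$ of the Dirichlet problem \eqref{HDP} obeying the scaling \eqref{resc.stat}; since $m>p$ the exponent $(\sigma+2)/(m-p)$ is positive, so $W_R$ can be made simultaneously wider than $B(0,R_0)$ and taller than $\|u_0\|_{\infty}$, and the Dirichlet comparison principle \cite[Proposition 2.2]{Su02} (legitimate because $|x|^{\sigma}$ is bounded on $B(0,R)$) yields $u\leq W_R$ up to time $T$, contradicting blow-up -- with no rate estimate, no Harnack inequality, and no free-boundary tracking. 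To salvage your write-up, either execute the Harnack/subsolution-fitting step in full, or replace the interface-speed heuristic by such a stationary supersolution barrier.
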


\medskip

The rest of the paper is devoted to the proofs of the previous theorems, in the order they were stated. Every theorem will be proved in a separate section, with the exception of Theorem \ref{th.exist}, whose proof will be split into two sections, since the techniques employed in the proofs of the existence and the non-existence results are totally different.

\section{Existence of solutions}\label{sec.exist}

This (rather short) section is devoted to the proof of Theorem \ref{th.exist}, part (a). To this end, we proceed by approximation, and this is performed in the following
\begin{lemma}\label{lem.approx}
Let $u_0$ be an initial condition as in \eqref{icond}. Assume that there exists a supersolution $v(x,t)$ to Eq. \eqref{eq1} in $Q_T$ such that $u_0(x)\leq v(x,0)$ for any $x\in\real^N$. Then, there exists a weak solution $u$ to the Cauchy problem \eqref{eq1}-\eqref{ic} in $Q_T$ such that $u(x,t)\leq v(x,t)$ for any $(x,t)\in Q_T$.
\end{lemma}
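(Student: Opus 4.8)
The plan is to construct $u$ as a monotone limit of classical solutions to nondegenerate problems posed on expanding balls, using $v$ as a global upper barrier throughout. The key simplification is that on each fixed ball $B_n := B(0,n)$ the weight $|x|^{\sigma}$ is bounded by $n^{\sigma}$, so the inhomogeneous coefficient becomes a harmless bounded lower-order term, and the only genuine difficulty is the degeneracy of the diffusion at the level set $\{u=0\}$.

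First I would set up the approximation. For each integer $n\geq R$ and each $\varepsilon\in(0,1)$, consider the uniformly parabolic problem on $B_n\times(0,T)$ obtained by replacing the degenerate diffusion $\Delta u^m$ by a nondegenerate regularization (e.g. $\mathrm{div}\,(m(u+\varepsilon)^{m-1}\nabla u)$), with homogeneous Dirichlet condition $u=0$ on $\partial B_n$ and smooth initial datum $u_{0,n}$ chosen so that $0\leq u_{0,n}\leq\min\{u_0,v(\cdot,0)\}$, with $u_{0,n}=0$ near $\partial B_n$ and $u_{0,n}\to u_0$ locally uniformly. Standard quasilinear parabolic theory (Ladyzhenskaya--Solonnikov--Ural'tseva) then yields a nonnegative classical solution $u_{n,\varepsilon}$ on a maximal interval.

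Second, the comparison principle on the bounded cylinder $B_n\times(0,T)$---where $v$ is a supersolution (the regularization being chosen so as to preserve this property) with $v\geq 0=u_{n,\varepsilon}$ on the lateral boundary and $v(\cdot,0)\geq u_{0,n}$---gives $0\leq u_{n,\varepsilon}\leq v$ throughout. Since $v$ is bounded on $\overline{B_n}\times[0,t]$ for each $t<T$, this barrier prevents the approximations from blowing up, so each $u_{n,\varepsilon}$ is in fact defined on all of $B_n\times(0,T)$. The same comparison shows that $u_{n,\varepsilon}$ is monotone: nondecreasing as $n$ increases (larger domain, larger data) and monotone as $\varepsilon\downarrow 0$. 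I would first let $\varepsilon\to 0$ to recover a continuous degenerate solution $u_n$ on $B_n$ with $0\leq u_n\leq v$, and then let $n\to\infty$; by monotone convergence $u:=\lim_n u_n$ exists pointwise and satisfies $0\leq u\leq v$ in $Q_T$.

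Finally I would verify that $u$ is a weak solution in the sense of Definition \ref{def.weak}. Because $0\leq u_n\leq v$ with $v$ locally bounded, dominated convergence upgrades the monotone pointwise convergence to strong $L^q_{\mathrm{loc}}$ convergence for every finite $q$, whence $u_n^m\to u^m$ and $u_n^p\to u^p$ in $L^1_{\mathrm{loc}}(Q_T)$; passing to the limit in the weak identity \eqref{weaksol} on a fixed bounded domain $\Omega$ (with $\varphi=0$ on $\partial\Omega$, so the boundary term drops) is then immediate, and the local integrability of $|x|^{\sigma}v^p$ controls the source term. The delicate point, and where the degeneracy really bites, is the requirement that $u$ be continuous: for this I would invoke the intrinsic-scaling Hölder regularity theory of DiBenedetto for degenerate parabolic equations to obtain interior Hölder estimates for $u_{n,\varepsilon}$ that are uniform in the approximation parameters on compact subsets of $Q_T$ (the bounded weight being absorbed into the lower-order terms). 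These estimates promote the monotone pointwise convergence to local uniform convergence, so that $u\in C(\real^N\times(0,t))$ as required. I expect this uniform regularity estimate to be the main technical obstacle; the remaining limits are routine monotonicity and dominated-convergence arguments.
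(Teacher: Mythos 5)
Your proposal follows essentially the same route as the paper, which constructs $u$ as a monotone limit of solutions to homogeneous Dirichlet problems on expanding balls $B(0,n)$, uses comparison with $v$ to get a uniform barrier, and invokes DiBenedetto's uniform modulus of continuity to obtain continuity of the limit; the paper simply delegates the details to \cite[Proposition 2.8]{Su02} after checking that the restriction $p>m$ there is inessential. Your extra layer, spelling out the $\varepsilon$-regularization behind the Dirichlet solvability, is fine in spirit but contains two slips worth flagging. First, your parenthetical ``the regularization being chosen so as to preserve this property'' hides a real issue: a supersolution of $\partial_t v\geq \Delta v^m+|x|^{\sigma}v^p$ is \emph{not} in general a supersolution of the regularized equation $\partial_t w=\Delta (w+\varepsilon)^m+|x|^{\sigma}w^p$, since $\Delta(v+\varepsilon)^m$ and $\Delta v^m$ are not comparable; likewise, monotonicity of $u_{n,\varepsilon}$ in $\varepsilon$ is asserted but not automatic. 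The standard (and the paper's) fix is to perform the comparison at the degenerate level: pass $\varepsilon\to 0$ first (using compactness rather than monotonicity in $\varepsilon$) to get the weak solution $u_n$ of the Dirichlet problem, and then compare $u_n$ with $v$ via the weak comparison principle on the bounded cylinder (\cite[Proposition 2.2]{Su02}), where $|x|^{\sigma}$ is bounded and $v\geq 0=u_n$ on the lateral boundary.

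Second, in verifying the weak formulation you claim the boundary term drops because $\varphi=0$ on $\partial\Omega$; but in Definition \ref{def.weak} the boundary term is $\int_0^t\int_{\partial\Omega}u^m\partial_n\varphi\,dS\,ds$, which involves the \emph{normal derivative} and does not vanish for admissible test functions. This is harmless once you have the locally uniform convergence $u_n\to u$ furnished by the uniform interior regularity estimates (which you correctly identify as the main technical point, via \cite{DiB83}), since then $u_n^m\to u^m$ uniformly on $\partial\Omega\times[0,t]$ and the boundary term passes to the limit; but as written the claim is wrong relative to the paper's notion of weak solution. With these two repairs your argument is correct and coincides with the paper's.
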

\begin{proof}[Sketch of the proof]
The proof is given in \cite[Proposition 2.8]{Su02}, where the restriction $p>m$ is assumed, but a simple inspection of the proof shows that the proof holds true for any $p$ and $m>1$. We give a short sketch, for the sake of completeness. The idea is to construct a solution by approximation with solutions to a sequence of Dirichlet problems in large balls. Let $u_{0,n}\in C_0^{\infty}(B(0,n))$ be such that $u_{0,n}\leq u_0$ in $B(0,n)=\{x\in\real^N: |x|<n\}$, for $n$ natural number, $u_{0,n}\to u_0$ uniformly on compact subsets of $\real^N$, and $u_{0,n}(x)\leq u_{0,n+1}(x)$ for any $x\in B(0,n)$. Let then $u_n$ be the unique solution to the homogeneous Dirichlet problem in $B(0,n)$ for Eq. \eqref{eq1} with initial condition $u_{0,n}$. Since this problem is a regular one, we infer by the standard comparison principle (see for example \cite[Proposition 2.2]{Su02}) that
$$
u_n(x,t)\leq u_{n+1}(x,t)\leq v(x,t), \qquad {\rm for \ any} \ (x,t)\in B(0,n)\times(0,T),
$$
and for any $n\in\mathbb{N}$, thus, there exists $u(x,t)=\lim\limits_{n\to\infty}u_n(x,t)\leq v(x,t)$. On the one hand, the continuity of the limit solution $u$ in $Q_T$ follows from the existence of an uniform modulus of continuity for the approximant family $u_n$ stemming from \cite[Proposition 1 and Theorem 1]{DiB83}. On the other hand, we readily get that the limit $u(x,t)$ satisfies the weak formulation \eqref{weaksol} for the Cauchy problem \eqref{eq1}-\eqref{ic} by Lebesgue's monotone convergence theorem. More details can be found in the proof of \cite[Proposition 2.8]{Su02}.
\end{proof}
We thus infer from Lemma \ref{lem.approx} that, in order to prove the existence of local in time solutions to the Cauchy problem \eqref{eq1}-\eqref{ic}, it is sufficient to construct a supersolution. This is done by using previous results related to a similar equation studied by Andreucci and DiBenedetto in \cite{AdB91}.
\begin{proof}[Proof of Theorem \ref{th.exist}, part (a)]
Let $u_0$ be an initial condition satisfying \eqref{icond} and \eqref{decay}. Following \cite{AdB91} and introducing first
$$
B_r(x):=\{y\in\real^N: |x-y|<(1+|x|)^r\}, \qquad r:=-\frac{\sigma(m-1)}{2(p-1)}<0,
$$
and then the norm
\begin{equation}\label{norm3}
\vertiii{u_0}:=\sup\limits_{x\in\real^N}(1+|x|)^{\sigma/(p-1)}\left[\frac{1}{|B_r(x)|}\int_{B_r(x)}u_0(y)\,dy\right],
\end{equation}
we readily observe that for very large $x$, for example $|x|\geq R+1$, with $R$ given in the statement of the condition \eqref{decay}, the right hand side of \eqref{norm3} is bounded due to the bound \eqref{decay} on $u_0$ and easy calculations. We conclude that $\vertiii{u_0}<\infty$ if the condition \eqref{decay} is fulfilled, and we are thus in the hypothesis of \cite[Theorem 3.1]{AdB91} with $q=1$ in the notation therein, which ensures (since $p<m<m+2/N$) the existence of a time $T>0$ and of a solution $\overline{u}$ in $Q_T$ to the Cauchy problem
\begin{equation}\label{eq2}
\left\{\begin{array}{ll}\partial_t\overline{u}=\Delta\overline{u}^m+(1+|x|)^{\sigma}\overline{u}^p, & (x,t)\in\real^N\times(0,T), \\ \overline{u}(x,0)=u_0(x), & x\in\real^N.\end{array}\right.
\end{equation}
Since $\sigma>0$, we also observe that $\overline{u}$ constructed above is a supersolution to the Cauchy problem \eqref{eq1}-\eqref{ic}, and Lemma \ref{lem.approx} then establishes the existence of at least a solution to the Cauchy problem \eqref{eq1}-\eqref{ic} in $Q_T$, as claimed.
\end{proof}

\section{Non-existence of solutions}\label{sec.nonexist}

This section is dedicated to the proof of the non-existence part in Theorem \ref{th.exist}. The argument adapts a classical one employed for the semilinear case $m=1$ by Baras and Kersner \cite{BK87}. For the convenience of the reader, we give below a detailed proof.
\begin{proof}[Proof of Theorem \ref{th.exist}, part (b)]
Let $u_0$ be as in \eqref{icond} and such that \eqref{not.decay} holds true. Assume for contradiction that there exists some $T>0$ and a solution $u$ to the Cauchy problem \eqref{eq1}-\eqref{ic} in $Q_T$. By eventually decreasing a bit $T$ (in order to avoid taking exactly the blow-up time, if any), we may assume without loss of generality that
$$
M(T):=\sup\limits_{t\in(0,T)}\|u(t)\|_{\infty}<\infty.
$$
Letting $\varphi\in C_0^{\infty}(Q_T)$ be a test function such that $\varphi\geq0$ in $Q_T$ and $\varphi(x,T)=0$ for any $x\in\real^N$, we infer from the weak formulation that
\begin{equation*}
\begin{split}
\int_{\real^N}u_0(x)\varphi(x,0)\,dx&=\int_0^T\int_{\real^N}\left(-u\varphi_t-u^m\Delta\varphi-|x|^{\sigma}u^p\varphi\right)\,dx\,dt\\
&=\int_0^T\int_{\real^N}\left(-u\frac{\varphi_t}{|x|^{\sigma}\varphi}-u^m\frac{\Delta\varphi}{|x|^{\sigma}\varphi}-u^p\right)|x|^{\sigma}\varphi\,dx\,dt\\
&\leq\int_0^T\int_{\real^N}\left(u\frac{(-\varphi_t)_+}{|x|^{\sigma}\varphi}+u^m\frac{(-\Delta\varphi)_+}{|x|^{\sigma}\varphi}-u^p\right)|x|^{\sigma}\varphi\,dx\,dt\\
&\leq\int_0^T\int_{\real^N}\left(u\frac{(-\varphi_t)_++M(T)^{m-1}(-\Delta\varphi)_+}{|x|^{\sigma}\varphi}-u^p\right)|x|^{\sigma}\varphi\,dx\,dt,
\end{split}
\end{equation*}
where $(\cdot)_+$ designs the positive part, and for the last inequality we have used the fact that $u^m\leq M(T)^{m-1}u$ in $Q_T$. By following \cite[Proposition 1 (i)]{BK87} and optimizing in $u$ in the right hand side of the above inequality, we find
\begin{equation}\label{interm1}
\begin{split}
\int_{\real^N}u_0(x)\varphi(x,0)\,dx&\leq\frac{p-1}{p^{p/(p-1)}}\int_0^T\int_{\real^N}\left[(-\varphi_t)_++M(T)^{m-1}(-\Delta\varphi)_+\right]^{p/(p-1)}\\
&\times|x|^{-\sigma/(p-1)}\varphi^{-1/(p-1)}\,dx\,dt.
\end{split}
\end{equation}
We next particularize $\varphi$ as a separate variable function, by setting
\begin{equation}\label{interm2}
\varphi(x,t)=\left[1-\frac{t}{T}\right]^{p/(p-1)}\psi(x),
\end{equation}
with $\psi\in C_0^{\infty}(\real^N)$, $\psi\geq0$. Since $\varphi(x,0)=\psi(x)$, a direct substitution of the ansatz \eqref{interm2} into the right hand side of \eqref{interm1} gives
\begin{equation*}
\begin{split}
\int_{\real^N}u_0(x)\psi(x)\,dx&\leq\frac{p-1}{p^{p/(p-1)}}\int_0^T\int_{\real^N}\left[\frac{p\psi(x)}{(p-1)T}+M(T)^{m-1}\left(1-\frac{t}{T}\right)(-\Delta\psi(x))_+\right]^{p/(p-1)}\\
&\times|x|^{-\sigma/(p-1)}\psi(x)^{-1/(p-1)}\,dx,dt\\
&\leq\frac{(p-1)T}{p^{p/(p-1)}}\int_{\real^N}\left[\frac{p\psi(x)}{(p-1)T}+M(T)^{m-1}\left(1-\frac{t}{T}\right)(-\Delta\psi(x))_+\right]^{p/(p-1)}\\
&\times|x|^{-\sigma/(p-1)}\psi(x)^{-1/(p-1)}\,dx,dt,
\end{split}
\end{equation*}
which furthermore gives, by raising both sides to the power $(p-1)/p$ and applying Minkowski's inequality with power $p/(p-1)>1$ in the right hand side:
\begin{equation*}
\begin{split}
\left(\int_{\real^N}u_0(x)\psi(x)\,dx\right)&^{(p-1)/p}\leq\frac{[(p-1)T]^{(p-1)/p}}{p}\\
&\times\left[\left(\int_{\real^N}\left(\frac{p}{(p-1)T}\right)^{p/(p-1)}\psi(x)|x|^{-\sigma/(p-1)}\,dx\right)^{(p-1)/p}\right.\\
&\left.+\left(\int_{\real^N}\frac{M(T)^{(m-1)p/(p-1)}(-\Delta\psi(x))^{p/(p-1)}}{|x|^{\sigma/(p-1)}\psi(x)^{1/(p-1)}}\,dx\right)^{(p-1)/p}\right]\\
&=[(p-1)T]^{-1/p}\left(\int_{\real^N}\frac{\psi(x)}{|x|^{\sigma/(p-1)}}\right)^{(p-1)/p}+\frac{[(p-1)T]^{(p-1)/p}M(T)^{m-1}}{p}\\
&\times\left[\int_{\real^N}|x|^{-\sigma/(p-1)}(-\Delta\psi(x))^{p/(p-1)}\psi(x)^{-1/(p-1)}\,dx\right]^{(p-1)/p}.
\end{split}
\end{equation*}
Fix now a radially symmetric function $\eta\in C_0^{\infty}(\real^N)$ such that $\eta(r)\geq0$ for any $r=|x|\geq0$, and ${\rm supp}\,\eta\subseteq\overline{B(0,2)}\setminus B(0,1)$. Since $\eta$ is a convex function in a neighborhood of its edges of the support, it follows readily that there exists a sufficiently large $k>0$ such that
\begin{equation}\label{interm0}
-\Delta\eta\leq k\eta.
\end{equation}
Set, for $\lambda>0$ fixed, $\psi(x):=\eta(x/\lambda)$ as test function in the above estimates. On the one hand, we obtain by direct substitution and also employing \eqref{interm0} in the second term of the right hand side that
\begin{equation}\label{interm3}
\begin{split}
\left(\int_{\real^N}u_0(x)\psi(x)\,dx\right)&^{(p-1)/p}\leq[(p-1)T]^{-1/p}\left(\int_{\real^N}\frac{\psi(x)}{|x|^{\sigma/(p-1)}}\right)^{(p-1)/p}\\
&+\frac{[(p-1)T]^{(p-1)/p}M(T)^{m-1}k}{p\lambda^2}\left(\int_{\real^N}\frac{\psi(x)}{|x|^{\sigma/(p-1)}}\right)^{(p-1)/p}.
\end{split}
\end{equation}
On the other hand, noticing that $\psi(x)$ is supported for $|x|\geq\lambda$, we can write
\begin{equation}\label{interm4}
\begin{split}
\int_{\real^N}u_0(x)\psi(x)\,dx&=\int_{\real^N}u_0(x)|x|^{\sigma/(p-1)}\frac{\psi(x)}{|x|^{\sigma/(p-1)}}\,dx\\
&\geq\inf\limits_{|x|\geq\lambda}\left(u_0(x)|x|^{\sigma/(p-1)}\right)\int_{\real^N}\frac{\psi(x)}{|x|^{\sigma/(p-1)}}\,dx.
\end{split}
\end{equation}
We then infer from gathering \eqref{interm3} and \eqref{interm4} and simplifying from both sides the integral term, that
\begin{equation}\label{interm5}
\inf\limits_{|x|\geq\lambda}\left(u_0(x)|x|^{\sigma/(p-1)}\right)\leq[(p-1)T]^{-1/p}+\frac{[(p-1)T]^{(p-1)/p}M(T)^{m-1}k}{p\lambda^2}.
\end{equation}
Since $\lambda>0$ has been arbitrarily chosen, we can pass to the limit as $\lambda\to\infty$ in \eqref{interm5} and finally deduce that
\begin{equation}\label{interm6}
\liminf\limits_{|x|\to\infty}|x|^{\sigma/(p-1)}u_0(x)\leq[(p-1)T]^{-1/p}.
\end{equation}
But we observe that \eqref{interm6} is in contradiction with \eqref{not.decay}. This contradiction implies the non-existence of solutions (even for a very short time) when \eqref{not.decay} is in force, as claimed.
\end{proof}

\section{Uniqueness and comparison}\label{sec.uniq}

This short section establishes the comparison principle stated in Theorem \ref{th.uniq} and is a very useful result in the sequel. More precisely, it will allow us to obtain finer properties of the solutions to the Cauchy problem \eqref{eq1}-\eqref{ic} by comparing general solutions of it to specific subsolutions and supersolutions.
\begin{proof}[Proof of Theorem \ref{th.uniq}]
Let $T>0$, a subsolution $u$ and a supersolution $v$ to Eq. \eqref{eq1} as in the statement of Theorem \ref{th.uniq}. By setting $g:=u-v$, $h:=u^p-v^p$, we can subtract the inequalities satisfied by $u$ and $v$ to find
\begin{equation}\label{interm7}
\partial_tg-\Delta(u^m-v^m)\leq|x|^{\sigma}h, \qquad (x,t)\in Q_T.
\end{equation}
We illustrate below a formal argument containing the ``core" of the proof. By multiplying \eqref{interm7} by the function ${\rm sign}_+(u-v)$, where
$$
{\rm sign}_+(x)=\left\{\begin{array}{ll}1, & {\rm if} \ x>0,\\ 0, & {\rm if} \ x\leq0,\end{array}\right.
$$
and then integrating over $\real^N$ and employing Kato's inequality (see for example \cite{BP08, Kato}), we have
\begin{equation*}
\begin{split}
\partial_t g\,{\rm sign}_+(u-v)&\leq\Delta(u^m-v^m)\,{\rm sign}_+(u-v)+|x|^{\sigma}(u^p-v^p)\,{\rm sign_+}(u-v)\\
&\leq\Delta(u^m-v^m)_{+}+|x|^{\sigma}h_+,
\end{split}
\end{equation*}
and we further infer by integration that
\begin{equation}\label{interm8}
\frac{d}{dt}\|g_+(t)\|_{1}\leq\int_{\real^N}|x|^{\sigma}h(x,t)_{+}\,dx.
\end{equation}
Noticing that \eqref{interm8} acts only in the region where $u>v$, as otherwise both positive parts are zero, we next employ the standard numerical inequality
$$
\frac{u^p-v^p}{u-v}\leq p(u^{p-1}+v^{p-1}), \qquad {\rm if} \ u>v \ {\rm and} \ p>1,
$$
together with \eqref{bound} to obtain
\begin{equation}\label{interm9}
\frac{d}{dt}\|g_+(t)\|_1\leq p\int_{\real^N}|x|^{\sigma}(u^{p-1}(x,t)+v^{p-1}(x,t))g_{+}(x,t)\,dx\leq 2pM(t)^{p-1}\|g_+(t)\|_{1},
\end{equation}
for any $t\in(0,T)$. An application of Gronwall's Lemma then entails
$$
\|g_+(t)\|_1\leq C(t)\|g_+(0)\|_1=0,
$$
since $u_0\leq v_0$ by the statement of Theorem \ref{th.uniq}. We infer that $g_+(t)=0$ a.\,e. and for any $t\in(0,T)$, which, together with the continuity of $u$ and $v$, gives that $u(x,t)\leq v(x,t)$ for any $(x,t)\in Q_T$. A rigorous proof, performed by employing a monotone increasing smooth approximation of the sign function and a cut-off function, follows practically verbatim the proof of the uniqueness result for the similar equation with the weight $(1+|x|)^{\sigma}$ (instead of $|x|^{\sigma}$) done in \cite[Section 15]{AdB91}.
\end{proof}

\section{Blow-up in finite time}\label{sec.BU}

This section is dedicated to the proof of Theorem \ref{th.BU}. The proof is based on the previously established comparison principle and the availability of suitable subsolutions in self-similar form.
\begin{proof}[Proof of Theorem \ref{th.BU}]
We look for subsolutions in backward self-similar form
\begin{equation}\label{SSS}
\underline{u}(x,t;T)=(T-t)^{-\alpha}f(\xi), \qquad \xi=|x|(T-t)^{\beta},
\end{equation}
with exponents $\alpha$ and $\beta$ given by
\begin{equation}\label{exp.SS}
\alpha=\frac{\sigma+2}{L}, \qquad \beta=\frac{m-p}{L}, \qquad L=\sigma(m-1)+2(p-1)>0,
\end{equation}
and with profiles $f$ solving the following differential equation:
\begin{equation}\label{SSODE}
(f^m)''(\xi)+\frac{N-1}{\xi}(f^m)'(\xi)-\alpha f(\xi)+\beta\xi f'(\xi)+\xi^{\sigma}f^p(\xi)=0.
\end{equation}
To be more specific, previous works by the authors established the existence of profiles $f(\xi)$ solutions to \eqref{SSODE} supported on a compact interval $[\xi_1,\xi_2]\subset(0,\infty)$ and having the following local behavior:

(a) $f(\xi_1)=0$, $(f^m)'(\xi_1)>0$,

(b) $f(\xi_2)=0$, $(f^m)'(\xi_2)=0$,

(c) $f(\xi)>0$ for any $\xi\in(\xi_1,\xi_2)$.

Indeed, the existence of such profiles is granted by \cite[Proposition 3.4]{IS21} in dimension $N=1$ and for $1<p<m$, \cite[Proposition 3.2]{IS19} in dimension $N=1$ and for $p=1$, and Steps 3 and 4 in \cite[Section 5]{ILS23} together with \cite[Lemma 3.4]{ILS23} in dimension $N\geq2$ and for $1\leq p<m$. Notice that the self-similar functions defined by \eqref{SSS} with arbitrary blow-up time $T>0$ and profile $f$ satisfying the conditions (a), (b) and (c) above (and extended by zero outside the interval $[\xi_1,\xi_2]$) are subsolutions to Eq. \eqref{eq1} (and in fact they would be true solutions, except for the contact point between $f(\xi)$ and zero at $\xi=\xi_1$). Moreover, since $T>0$ is a free parameter, the subsolution $\underline{u}(x,t;T)$ has as initial condition
\begin{equation}\label{init.SSS}
\begin{split}
&\underline{u}(x,0;T)=T^{-\alpha}f(|x|T^{\beta}), \qquad \|\underline{u}(\cdot,0;T)\|_{\infty}=T^{-\alpha}\max\{f(\xi):\xi\in[\xi_1,\xi_2]\},\\
&{\rm supp}\,\underline{u}(\cdot,0;T)=[\xi_1T^{-\beta},\xi_2T^{-\beta}].
\end{split}
\end{equation}

Let now $u_0$ be an initial condition as in \eqref{icond} and \eqref{decay} (which in particular includes the compactly supported case) and let $u$ be the unique solution to the Cauchy problem \eqref{eq1}-\eqref{ic} for $1<p<m$ following from Theorems \ref{th.exist} and \ref{th.uniq}. Assume for contradiction that $u$ is a global solution, that is, $u(t)\in L^{\infty}(\real^N)$ for any $t\in(0,\infty)$. It is then obvious that $u$ is a supersolution to the Cauchy problem for the porous medium equation
\begin{equation}\label{PME}
v_t=\Delta v^m, \qquad (x,t)\in\real\times(0,\infty), \qquad v(x,0)=u_0(x), \qquad x\in\real^N,
\end{equation}
and the comparison principle applied to the Cauchy problem \eqref{PME} entails that
$$
u(x,t)\geq v(x,t), \qquad (x,t)\in\real^N\times(0,\infty),
$$
where $v$ is the unique solution to \eqref{PME}. According to well-known properties of the porous medium equation (see for example \cite[Proposition 9.19]{VazquezPME}), the expansion of the support of $v(t)$ covers the whole space as $t\to\infty$, hence there exists $\tau_0>0$ such that
$$
B(0,2\xi_2)\subset{\rm supp}\,v(\tau_0)\subseteq{\rm supp}\,u(\tau_0)
$$
and there exists some $\epsilon>0$ such that
\begin{equation}\label{interm14bis}
u(x,\tau_0)\geq v(x,\tau_0)\geq\epsilon, \qquad x\in B(0,\xi_2).
\end{equation}
We next observe that \eqref{init.SSS} implies that, for any $T>1$, we have
\begin{equation}\label{interm15}
{\rm supp}\,\underline{u}(\cdot,0;T)=[\xi_1T^{-\beta},\xi_2T^{-\beta}]\subset B(0,\xi_2)\subset{\rm supp}\,u(\tau_0).
\end{equation}
Pick now some $T_0>1$ sufficiently large such that
\begin{equation}\label{interm15bis}
\|\underline{u}(\cdot,0;T_0)\|_{\infty}=T_0^{-\alpha}\max\{f(\xi):\xi\in[\xi_1,\xi_2]\}<\epsilon,
\end{equation}
and notice that \eqref{interm14bis}, \eqref{interm15} and \eqref{interm15bis} lead to the inequality
$$
u(x,\tau_0)\geq\underline{u}(x,0;T_0).
$$
The comparison principle then entails that
$$
u(x,\tau_0+t)\geq\underline{u}(x,t;T_0), \qquad (x,t)\in\real^N\times(0,\infty),
$$
and we reach a contradiction with the assumption of $u$ being a global solution, since $\underline{u}(\cdot,\cdot;T_0)$ blows up at $t=T_0$. This contradiction completes the proof. Notice that this proof also works for $p=1$.
\end{proof}

\section{Finite speed of propagation}\label{sec.FSP}

The goal of this section is to prove Theorem \ref{th.fsp}. The idea of the proof is to compare from above a compactly supported initial condition (or solution at some time $t\in(0,T)$) with a family of supersolutions in self-similar form built on basis of the classification of self-similar solutions to Eq. \eqref{eq1} performed in \cite{IS21, ILS23}. In these two works (the former in dimension $N=1$ and the latter in dimension $N\geq2$) we investigated the self-similar solutions to Eq. \eqref{eq1} in the same range $1<p<m$, in the form \eqref{SSS}, with exponents $\alpha$ and $\beta$ given in \eqref{exp.SS} and with profiles $f$ solving the differential equation \eqref{SSODE}. We recall here, as preliminary facts, statements granting the existence of the self-similar profiles that will be used throughout this section.
\begin{lemma}\label{lem.interf}
Let $N\geq2$. Then there exists $\Xi\in(0,\infty)$ such that, for any $\xi_0\in(0,\Xi)$, there exists a unique solution $f$ to \eqref{SSODE} which is decreasing for $\xi\in(0,\xi_0)$, having an interface exactly at $\xi=\xi_0$, in the sense that
\begin{equation}\label{interf}
f(\xi_0)=0, \qquad f(\xi)>0 \ {\rm for} \ \xi\in(0,\xi_0), \qquad (f^m)'(\xi_0)=0
\end{equation}
and having a vertical asymptote as $\xi\to0$ with the local behavior
\begin{equation}\label{vert.asympt}
f(\xi)\sim D\xi^{-(N-2)/m}, \ {\rm if} \ N\geq3, \qquad f(\xi)\sim D(-\ln\,\xi)^{1/m}, \ {\rm if} \ N=2,
\end{equation}
where $D>0$ is a constant. If $N=1$, the previous statement remains true with the addition that the local behavior \eqref{vert.asympt} as $\xi\to0$ is replaced by the following
\begin{equation}\label{neg.slope}
f(0)=a>0, \qquad f'(0)<0.
\end{equation}
\end{lemma}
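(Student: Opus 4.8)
This statement recalls, for later use, the existence and uniqueness of self-similar profiles solving \eqref{SSODE}; it was proved in \cite{IS21} for $N=1$ and in \cite{ILS23} for $N\ge2$. The plan is to recast the second-order equation \eqref{SSODE} as an autonomous planar system and to obtain the sought profile as a connecting orbit between two prescribed critical points. Concretely, I would work with the cumulative variable $g=f^m$ and pass to logarithmic similarity coordinates, for instance
$$
s=\ln\xi, \qquad X=\frac{\xi\,(f^m)'}{f^m}, \qquad Y=\frac{\xi^{2+\sigma}f^{p}}{f^m},
$$
(or any equivalent choice adapted to the scaling of the diffusion part), after which \eqref{SSODE} becomes a first-order system $\dot X=F(X,Y)$, $\dot Y=G(X,Y)$ with finitely many critical points, to be analysed on the line $s\in\real$.

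The two relevant critical points come from the prescribed endpoint behaviours. As $\xi\to0$ the dominant balance in \eqref{SSODE} is the harmonic one $(f^m)''+\tfrac{N-1}{\xi}(f^m)'\approx0$, giving $f^m\sim\xi^{-(N-2)}$ for $N\ge3$ and $f^m\sim-\ln\xi$ for $N=2$; this reproduces \eqref{vert.asympt} and corresponds to a critical point $P_0$ from which, after linearisation, a one-parameter family of orbits (indexed by the constant $D$) leaves into the region $f>0$. For $N=1$ the origin is a regular point and the boundary datum \eqref{neg.slope} instead selects a one-parameter family indexed by $a=f(0)>0$ with $f'(0)<0$. The other end is the interface: imposing $f(\xi_0)=0$ together with $(f^m)'(\xi_0)=0$ forces the degenerate porous-medium contact $f^{m-1}\sim c(\xi_0-\xi)$ as $\xi\uparrow\xi_0$ (the exponent $m/(m-1)$ being fixed by balancing $(f^m)''$ against the drift $\beta\xi f'$), which after a suitable compactification is again a critical point of the system.

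I would then run a shooting argument in the parameter $D$ (respectively $a$): integrating forward the orbit leaving $P_0$ (resp. the datum at $\xi=0$), I would follow it until $f$ first vanishes, at some location $\xi_0=\xi_0(D)$, and show that along the way $f$ stays strictly decreasing and that the vanishing occurs with the admissible slope $(f^m)'(\xi_0)=0$ rather than a strictly negative contact slope. A monotonicity analysis of the shooting map $D\mapsto\xi_0(D)$ (equivalently, a sign analysis of the flow) then shows that $\xi_0$ sweeps out an interval $(0,\Xi)$ bijectively, which yields simultaneously existence for every $\xi_0\in(0,\Xi)$ and uniqueness of the corresponding profile.

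The hard part will be the global control ensuring that the shot from $P_0$ reaches the interface with \emph{exactly} the degenerate slope over the whole admissible range, and pinning down the threshold $\Xi$. One must exclude the competing scenarios in which $f$ hits zero with strictly negative $(f^m)'$ (a non-admissible interface), in which $f$ loses monotonicity before vanishing, or in which the orbit is captured by a different critical point so that the profile fails to be compactly supported. Ruling out these alternatives — typically by monotonicity of suitable energy-type functionals along the flow together with barrier comparisons — is the technical heart of the argument, and is exactly where the value of $\Xi$ is determined.
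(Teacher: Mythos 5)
Your plan correctly identifies that this lemma is recalled from \cite{IS21} ($N=1$) and \cite{ILS23} ($N\geq2$) -- indeed the paper's own ``proof'' is a citation: for $N\geq 2$ the profiles are obtained in \cite{ILS23} as orbits connecting two critical points ($Q_5$, the vertical asymptote, and $P_1$, the interface) of a phase space, and your local computations at the two ends (the harmonic balance $(f^m)''+\frac{N-1}{\xi}(f^m)'\approx 0$ giving \eqref{vert.asympt}, and the degenerate contact $f^{m-1}\approx\frac{\beta(m-1)\xi_0}{m}(\xi_0-\xi)$ obtained by balancing $(f^m)''$ against $\beta\xi f'$) are both right. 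However, the concrete reduction on which your whole argument rests is impossible: \eqref{SSODE} does \emph{not} close as a planar autonomous system in $X=\xi(f^m)'/f^m$, $Y=\xi^{\sigma+2}f^{p-m}$, nor in any other pair of variables. Substituting the equation into $\dot X$ brings in the self-similar terms $-\alpha f+\beta\xi f'$, which generate the quantity $V=\xi^2 f^{1-m}$; on the level sets of $(X,Y)$ the quantity $V$ is non-constant unless the exponent vectors $(2,1-m)$ and $(\sigma+2,p-m)$ are parallel, i.e.\ unless $\sigma(m-1)+2(p-1)=0$. Equivalently, \eqref{SSODE} admits a one-parameter scaling symmetry (the prerequisite for a two-dimensional autonomous reduction) if and only if $L=0$, which is excluded since $L>0$ throughout the paper. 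This is precisely why \cite{IS21, ILS23} work with a \emph{three-dimensional} system, with the asymptote point sitting at infinity of the Poincar\'e compactification. A planar reduction of the type you write down does work for the \emph{stationary} equation \eqref{ODE} -- it is exactly what the paper does in Lemma \ref{lem.stationary}, whose scaling invariance \eqref{resc.stat} reflects the missing symmetry -- but not for the self-similar equation \eqref{SSODE}; the analogy breaks down at this point.

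This is not a repairable detail but the core of your plan: in a three-dimensional phase space the tools you invoke disappear. There is no scalar shooting map $D\mapsto\xi_0(D)$ whose monotonicity yields existence and uniqueness simultaneously, no monotone planar flow or isocline trapping, and no Poincar\'e--Bendixson-type exclusion of recurrent behavior. The existence of the connection between the asymptote point and the interface point, together with the determination of the threshold $\Xi$ -- the step you yourself label ``the technical heart'' and leave open -- is exactly the content of the topological argument in \cite[Section 4]{ILS23}, while uniqueness for a fixed $\xi_0$ comes from the local analysis of the orbits entering the interface point (\cite[Lemma 2.2]{ILS23}), not from an injectivity property of a forward shooting map, which you assert but do not prove. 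So as written the proposal starts from a reduction that cannot be performed and defers the substantive steps to future work; to make it correct one must either reproduce the three-dimensional phase-space analysis of \cite{IS21, ILS23} or simply cite those results, as the paper does.
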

\begin{proof}
For $N\geq2$, the conclusion follows from the proof of Theorem 1.4 given in \cite[Section 4]{ILS23}. More precisely, the orbits connecting the critical points $Q_5$ and $P_1$ of the phase space in the notation therein represent the two local behaviors \eqref{vert.asympt} as $\xi\to0$ (see \cite[Lemmas 3.2 and 3.3]{ILS23}), respectively \eqref{interf} as $\xi\to\xi_0$ (see \cite[Lemma 2.2]{ILS23}). Thus, \cite[Step 4, Section 4]{ILS23} implies the existence of the desired orbits and then profiles. In dimension $N=1$, the conclusion follows directly from \cite[Proposition 3.5]{IS21} and its proof, which also establishes that, if we denote by $a(\xi_0)$ the value of $f(0)$ in \eqref{neg.slope} for the profile with interface at $\xi=\xi_0$ as in \eqref{interf}, then $a(\xi_0)\to\infty$ as $\xi_0\to0$.
\end{proof}
\begin{lemma}\label{lem.Q1}
For any $a\in(0,\infty)$, there exists a unique solution $f$ to \eqref{SSODE} such that $f(0)=a$, $f'(0)=0$. This solution is increasing in a maximal right neighborhood $(0,\xi_1(a))$, reaching a local maximum at $\xi=\xi_1(a)$.
\end{lemma}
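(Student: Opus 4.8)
The plan is to treat \eqref{SSODE} as a singular initial-value problem at $\xi=0$, produce the solution with $f(0)=a$, $f'(0)=0$ by a contraction argument, and then read off the monotonicity and the first local maximum from the structure of the equation. The singular coefficient $\frac{N-1}{\xi}(f^m)'$ is best handled by writing \eqref{SSODE} in divergence form
\[
\big(\xi^{N-1}(f^m)'\big)'=\xi^{N-1}\big[\alpha f-\beta\xi f'-\xi^{\sigma}f^p\big].
\]
Integrating once from $0$ to $\xi$ (the boundary term at $0$ vanishes because $(f^m)'(0)=m a^{m-1}f'(0)=0$) and integrating by parts the term carrying $f'$ to remove the derivative from the right-hand side, I obtain
\[
(f^m)'(\xi)=\xi^{1-N}\int_0^{\xi}s^{N-1}\big[(\alpha+\beta N)f(s)-s^{\sigma}f^p(s)\big]\,ds-\beta\xi f(\xi).
\]
A further integration turns the problem into a fixed-point equation $f=\mathcal{T}f$, where $\mathcal{T}f$ is the $m$-th root of $a^m$ plus the iterated integral of the right-hand side above.

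I would then show that $\mathcal{T}$ is a contraction on the closed ball $\{f\in C([0,\delta]):\ a/2\le f\le 2a\}$ for $\delta$ small. The key point is that the inner operator $r^{1-N}\int_0^r s^{N-1}(\cdot)\,ds$ gains a factor $r/N$ on bounded integrands, so the perturbation of $a^m$ is $O(\delta^2)$, while the Lipschitz constants of $f\mapsto f^p$ and of the outer $m$-th root (whose argument stays near $a^m>0$) are uniformly controlled. Banach's theorem yields a unique $f\in C([0,\delta])$; one checks that $f\in C^2((0,\delta])$ solves \eqref{SSODE} classically and that $f^m(\xi)=a^m+O(\xi^2)$, so that indeed $f'(0)=0$, and standard continuation extends $f$ to a maximal interval as long as $f>0$. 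Evaluating \eqref{SSODE} at the origin through the expansion $f=a+\tfrac12 f''(0)\xi^2+o(\xi^2)$ and L'Hopital's rule on the singular term gives $f''(0)=\alpha a^{2-m}/(Nm)>0$, whence $f$ is strictly increasing on a right neighborhood of $0$.

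To pin down the first local maximum, I set $I(\xi):=\xi^{N-1}(f^m)'(\xi)$, which shares the sign of $f'$ since $f>0$, and put $\xi_1(a):=\sup\{\xi:\ f'>0 \text{ on }(0,\xi)\}$. The divergence identity gives $I'(\xi)=\xi^{N-1}f(\xi)\big[\alpha-\beta\xi f'/f-\xi^{\sigma}f^{p-1}\big]$. For $\xi>\xi_2:=(\alpha a^{1-p})^{1/\sigma}$ one has $\xi^{\sigma}f^{p-1}\ge\xi^{\sigma}a^{p-1}>\alpha$, so the bracket is negative (its middle term is $\le 0$ while $f'>0$) and, using $f\ge a$, $I'(\xi)\le a\xi^{N-1}(\alpha-\xi^{\sigma}a^{p-1})$, whose integral diverges to $-\infty$. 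Were $\xi_1(a)$ infinite, this would force $I\to-\infty$, contradicting $I\ge 0$ on $(0,\xi_1(a))$; hence $\xi_1(a)<\infty$ and $f'(\xi_1(a))=0$, i.e.\ $f$ attains a local maximum there.

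I expect the delicate point to be ensuring that the increasing solution neither blows up nor ceases to exist before $\xi_1(a)$, so that the maximum is genuinely reached. This is handled by the same sign mechanism: as soon as $f$ is large enough that $\xi^{\sigma}f^{p-1}>\alpha$, the bracket becomes negative and $I$ is decreasing, so $0<I(\xi)\le I(\xi_2)$ there; consequently $(f^m)'=I\,\xi^{1-N}$ stays bounded, $f^m$ grows at most linearly, and $f$ remains finite and bounded away from $0$ on $[0,\xi_1(a)]$. As an alternative to this ODE bookkeeping, both the existence--uniqueness statement and the existence of the first maximum can be read off the behaviour of the orbit emanating from the critical point $Q_1$ in the phase-plane analysis of \cite{IS21,ILS23}.
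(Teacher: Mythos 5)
Your proposal is correct, and it reaches the lemma by a genuinely different and more self-contained route than the paper. For existence and uniqueness the paper simply appeals to standard ODE theory, deferring to the phase-plane proofs of \cite[Lemma 3.1]{ILS23} (for $N\geq2$) and \cite[Lemma 2.5]{IS21} (for $N=1$); your Banach fixed-point argument on the integral equation derived from the divergence form $\bigl(\xi^{N-1}(f^m)'\bigr)'=\xi^{N-1}\bigl[\alpha f-\beta\xi f'-\xi^{\sigma}f^p\bigr]$, with the integration by parts that removes $f'$ from the right-hand side, handles the singular coefficient $(N-1)/\xi$ explicitly, which the citation glosses over; your identity and the $O(\delta^2)$ perturbation estimate check out, and classical uniqueness away from $\xi=0$ (where $f\approx a>0$ keeps the equation non-degenerate) extends the local uniqueness, as you indicate. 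The computation at the origin agrees with the paper's: your $f''(0)=\alpha a^{2-m}/(Nm)>0$ is exactly the paper's $(f^m)''(0)=\alpha a/N$, obtained there by taking limits in \eqref{SSODE}. The real divergence is in the second half: the paper rules out $f$ increasing on all of $(0,\infty)$ by invoking the full classification of profile behaviors for $\sigma>0$ from \cite{IS21,ILS23} (every profile either decays to zero at infinity or vanishes at a finite point), whereas your monotonicity argument for $I(\xi)=\xi^{N-1}(f^m)'(\xi)$ is elementary, quantitative, and independent of the companion papers: the bracket $\alpha-\beta\xi f'/f-\xi^{\sigma}f^{p-1}$ is indeed negative once $\xi^{\sigma}a^{p-1}>\alpha$ (using $\beta=(m-p)/L>0$ and $f\geq a$ on the increasing interval), the bound $I'(\xi)\leq a\xi^{N-1}(\alpha-\xi^{\sigma}a^{p-1})$ has divergent integral, and $I\geq0$ wherever $f'>0$, so the increasing phase must end at a finite $\xi_1(a)$; moreover your continuation step (once the bracket is negative, $I$ decreases, so $(f^m)'$ stays bounded and $f$ cannot blow up while increasing) correctly closes the loophole that the maximal solution might cease to exist before $\xi_1(a)$ is reached. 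The one point you leave implicit --- as does the paper --- is that $\xi_1(a)$ is a genuine local maximum rather than an inflection: since $f'>0$ on the left and $f'(\xi_1)=0$, necessarily $(f^m)''(\xi_1)\leq0$, and the degenerate case $(f^m)''(\xi_1)=0$ is excluded because differentiating \eqref{SSODE} at $\xi_1$ then yields $(f^m)'''(\xi_1)=-\sigma\xi_1^{\sigma-1}f(\xi_1)^p<0$, which would force $(f^m)'<0$ on both sides of $\xi_1$, contradicting the monotonicity on the left; adding this one-line remark would make your argument fully airtight.
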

\begin{proof}
The existence and uniqueness follow from standard results in the theory of ODEs applied to \eqref{SSODE}, but a different proof based on a phase space argument is given in \cite[Lemma 3.1]{ILS23} in dimension $N\geq2$, respectively \cite[Lemma 2.5]{IS21} and the remark at the end of its proof in dimension $N=1$. Let us now fix such a solution $f$ with $f(0)=a$, $f'(0)=0$. Noticing that
$$
\lim\limits_{\xi\to0}\frac{(N-1)}{\xi}(f^m)'(\xi)=(N-1)(f^m)''(0),
$$
we readily get by taking limits as $\xi\to0$ in \eqref{SSODE} that
$$
N(f^m)''(0)-\alpha a=0, \qquad {\rm that \ is}, \qquad (f^m)''(0)=\frac{\alpha a}{N}>0.
$$
Thus, $f$ starts increasingly in a right neighborhood of $\xi=0$. Then the classification of all possible behaviors of profiles to \eqref{SSODE} given in \cite{IS21, ILS23} shows that, with $\sigma>0$, any profile $f(\xi)$ must have a local behavior leading to either $f(\xi)\to0$ as $\xi\to\infty$ or $f(\xi_0)=0$ for some $\xi_0\in(0,\infty)$. This shows that $f(\xi)$ cannot be increasing on $(0,\infty)$, and it suffices to let $\xi_1(a)$ be the first maximum point of $f$ in order to reach the conclusion.
\end{proof}
With these preliminaries, we are in a position to complete the proof of Theorem \ref{th.fsp}.
\begin{proof}[Proof of Theorem \ref{th.fsp}]
Let us assume first that $N\geq2$. Also assume that, for some time $t_0\in[0,\infty)$, $u(t_0)\in L^{\infty}(\real^N)$ and is a compactly supported function in $\real^N$. Let
$$
\zeta(t_0):=\sup\{|x|: x\in\real^N, u(x,t_0)>0\}
$$
be the maximum amplitude of the positivity set of $u(t_0)$ and $a=\|u(t_0)\|_{\infty}$. Consider the unique solution $f_1$ to \eqref{SSODE} such that $f_1(0)=a$, $f_1'(0)=0$ given by Lemma \ref{lem.Q1} and let $\xi_1(a)$ be its first maximum point. Fix then $\xi_0(a)=\min\{\Xi/2,\xi_1(a)\}$, where $\Xi$ has been introduced in the statement of Lemma \ref{lem.interf}, and let $f_2$ be the unique decreasing solution to \eqref{SSODE} with interface at $\xi=\xi_0(a)$ and vertical asymptote as in \eqref{vert.asympt} given by Lemma \ref{lem.interf}. Since $f_1$ is increasing for $\xi\in(0,\xi_1(a))$, it is obvious that there exists an intersection point $\overline{\xi}(a)\in(0,\xi_1(a))$ between the two profiles such that $f_1(\overline{\xi}(a))=f_2(\overline{\xi}(a))$. Let us consider then the combined function
\begin{equation}\label{interm10}
f(\xi)=\left\{\begin{array}{ll}f_1(\xi), \qquad {\rm for} \ \xi\in[0,\overline{\xi}(a)],\\ f_2(\xi), \qquad {\rm for} \ \xi\in[\overline{\xi}(a),\xi_0(a)],\end{array}\right.
\end{equation}
and define the self-similar function
\begin{equation}\label{self.super}
\overline{U}(x,t;\tau)=(\tau+t_0-t)^{-\alpha}f(|x|(\tau+t_0-t)^{\beta}), \qquad x\in\real^N, \ t\in(t_0,t_0+\tau),
\end{equation}
with $\tau>0$ still a free parameter to be chosen later. By the construction, it is clear that $f(\xi)=\min\{f_1(\xi),f_2(\xi)\}$ for any $\xi\in(0,\xi_0(a))$, and since the self-similar solutions constructed following \eqref{SSS} with profiles $f_1$ and $f_2$ are solutions to Eq. \eqref{eq1} respectively on $[0,\xi_1(a)(\tau+t_0-t)^{-\beta}]$ and on $(0,\xi_0(a)(\tau+t_0-t)^{-\beta})$, we infer that $\overline{U}(\cdot,\cdot;\tau)$ defined in \eqref{self.super} is a compactly supported supersolution to Eq. \eqref{eq1} for any $\tau>0$. We are thus left with finding a parameter $\tau>0$ such that $\overline{U}(x,t_0;\tau)>u(x,t_0)$ for any $x\in\real^N$. To this end, it is enough to know that the support of $\overline{U}(\cdot,t_0;\tau)$ includes strictly the support of $u(t_0)$ and that the infimum of $\overline{U}(x,t_0;\tau)$ for $x\in[0,\zeta(t_0)]$ is larger than $\|u(t_0)\|_{\infty}$. The former condition leads to
\begin{equation}\label{cond.supp}
\zeta(t_0)<\xi_0(a)\tau^{-\beta},
\end{equation}
while the latter condition, together with the monotonicity of the function $f_1$, respectively $f_2$, on the corresponding regions considered in \eqref{interm10}, lead to
\begin{equation}\label{cond.height}
\begin{split}
\|u(t_0)\|_{\infty}&<\min\{\overline{U}(x,t_0;\tau): |x|\leq\zeta(t_0)\}=\min\{\overline{U}(0,t_0;\tau),\overline{U}(\zeta(t_0),t_0;\tau)\}\\
&=\tau^{-\alpha}\min\{a,f_2(\zeta(t_0)\tau^{\beta})\}.
\end{split}
\end{equation}
It is obvious that there exists $\tau<1$ sufficiently small such that both conditions \eqref{cond.supp} and \eqref{cond.height} are simultaneously fulfilled. Fixing such a value of $\tau>0$, we get that $\overline{U}(x,t_0;\tau)>u(x,t_0)$ for any $x\in\real^N$ and the comparison principle (see Theorem \ref{th.uniq}) then entails that $\overline{U}(x,t;\tau)>u(x,t)$ for any $t\in(t_0,t_0+\tau)$. In particular, we deduce from the form of $\overline{U}$ that $u(t)$ remains compactly supported for any $t\in(t_0,t_0+\tau)$. Since $t_0$ has been arbitrarily chosen (with the only condition that $u(t_0)$ is bounded, thus, before the finite blow-up time $T\in(0,\infty)$ of $u$), we complete the proof of Theorem \ref{th.fsp} in dimension $N\geq2$. The proof in dimension $N=1$ is totally similar, only that we have to consider a decreasing profile $f_2$ on $(0,\xi_0)$ with $f_2(0)=A>0$ sufficiently large, according to \eqref{neg.slope}.
\end{proof}

\section{Absence of localization}\label{sec.loc}

This section is devoted to the proof of Theorem \ref{th.loc}. To this end, we begin by borrowing an idea employed in dimension $N=1$ in \cite[Lemma 6]{FdPV06} and adapting it to our case. For a generic radius $R>0$, we consider the family of homogeneous Dirichlet problem for Eq. \eqref{eq1} in $B(0,R)$, that is,
\begin{equation}\label{HDP}
(HDP_R) \qquad \left\{\begin{array}{ll}w_t=\Delta w^m+|x|^{\sigma}w^p, & (x,t)\in B(0,R)\times(0,\infty),\\ w(x,t)=0, & (x,t)\in S(0,R)\times(0,\infty), \\
w(x,0)=w_0(x), & x\in B(0,R),\end{array}\right.
\end{equation}
where, as usual,
$$
S(0,R)=\{x\in\real^N: |x|=R\}.
$$
\begin{lemma}\label{lem.stationary}
For any $R>0$, there exists a stationary, radially symmetric solution $W_R$ to the problem \eqref{HDP} such that $W_R(0)>0$, $W_R'(0)=0$. Moreover, these stationary solutions are related through the following rescaling:
\begin{equation}\label{resc.stat}
W_R(r)=R^{(\sigma+2)/(m-p)}W_1\left(\frac{r}{R}\right), \qquad r=|x|, \qquad R>0.
\end{equation}
\end{lemma}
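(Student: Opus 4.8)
The plan is to reduce the entire statement to the existence of a single profile, the stationary solution $W_1$ on the unit ball, and then to generate all the others through the explicit rescaling \eqref{resc.stat}. A radially symmetric stationary solution solves the ODE $(w^m)''+\frac{N-1}{r}(w^m)'+r^{\sigma}w^p=0$ on $(0,R)$ together with $w(R)=0$. If I set $W_R(r):=\lambda\,W_1(r/R)$ and substitute into this equation, using that $W_1$ solves it on $(0,1)$, then the diffusion and source terms reduce to multiples of $(r/R)^{\sigma}W_1^{p}(r/R)$ and the equation collapses to the algebraic identity $\lambda^{m}R^{-2}=R^{\sigma}\lambda^{p}$, which forces exactly $\lambda=R^{(\sigma+2)/(m-p)}$ because $m>p$. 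This both yields \eqref{resc.stat} and shows that $W_R$ inherits $W_R(0)=\lambda W_1(0)>0$, $W_R'(0)=\lambda R^{-1}W_1'(0)=0$ and $W_R(R)=\lambda W_1(1)=0$. The same computation reflects the invariance of the stationary equation under $w\mapsto\mu^{-(\sigma+2)/(m-p)}w(\mu\,\cdot)$, which sends a profile vanishing at radius $\rho$ to one vanishing at radius $\rho/\mu$; hence constructing a solution on one ball is equivalent to constructing it on all balls.

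It thus remains to produce one radial stationary profile that vanishes at some finite radius, and I would do this by a shooting argument in the spirit of Lemma \ref{lem.Q1}. For $a>0$ consider the initial value problem for the radial equation with $w(0)=a$, $w'(0)=0$; passing to the variable $v:=w^m$ turns it into $v''+\frac{N-1}{r}v'+r^{\sigma}v^{p/m}=0$ with $v(0)=a^m$, $v'(0)=0$, a \emph{sublinear} problem since $p/m<1$, for which local well-posedness near $r=0$ follows from the standard integral reformulation exactly as for \eqref{SSODE}. Multiplying by $r^{N-1}$ and integrating gives the flux identity $r^{N-1}v'(r)=-\int_0^r s^{N-1+\sigma}v^{p/m}(s)\,ds$, so $v'<0$ and $v$ is strictly decreasing as long as it stays positive; in particular the profile is monotone and cannot return.

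The crux is to show that $v$ actually reaches zero at a \emph{finite} radius $r_0(a)$ rather than remaining positive on $[0,\infty)$, and this is exactly where $p<m$ enters decisively. Estimating the flux identity from below on the annulus $[r/2,r]$, where $v(s)\ge v(r)$, yields $-v'(r)\ge c\,r^{\sigma+1}v(r)^{p/m}$ with $c=c(N,\sigma)>0$; separating variables and integrating from a fixed $r_*>0$ gives $\frac{1}{1-p/m}\big(v(r_*)^{1-p/m}-v(r)^{1-p/m}\big)\ge\frac{c}{\sigma+2}\big(r^{\sigma+2}-r_*^{\sigma+2}\big)$. The left-hand side stays bounded as $v\downarrow0$ precisely because $1-p/m>0$, while the right-hand side diverges as $r\to\infty$; this contradiction forces $v(r_0(a))=0$ for some finite $r_0(a)$, with $v'(r_0(a))<0$ from the flux identity. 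This is a genuine Dirichlet interface rather than a zero-flux one, which is exactly what \eqref{HDP} requires on $S(0,R)$, and it is the reason this stationary profile differs from the self-similar profiles of \eqref{interf}.

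Finally, rescaling this profile by the invariance above so that its vanishing radius becomes $1$ produces $W_1$, and then $W_R$ defined by \eqref{resc.stat} solves $(HDP_R)$ for every $R>0$ with the stated normalization at the origin; positivity inside and continuity up to the boundary follow from the ODE and the strong maximum principle applied to $\Delta v=-r^{\sigma}v^{p/m}\le0$. I expect the only genuinely delicate point to be the finite-interface claim of the preceding paragraph. Should one prefer to bypass the ODE estimate, the same existence can be obtained variationally, by minimizing the coercive sublinear energy $\frac12\int_{B(0,1)}|\nabla v|^2-\frac{m}{m+p}\int_{B(0,1)}|x|^{\sigma}v^{(m+p)/m}\,dx$ over radial functions in $H_0^1(B(0,1))$ and invoking the principle of symmetric criticality; the shooting route, however, fits the ODE framework used throughout this paper.
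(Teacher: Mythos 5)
Your proposal is correct, and it takes a genuinely different route from the paper's proof. The rescaling part is identical in both (direct substitution into the stationary equation \eqref{ODE}, forcing $\lambda=R^{(\sigma+2)/(m-p)}$), but for existence the paper does not shoot directly: it passes to the phase-plane variables $Y=rW'/W$, $Z=\frac{1}{m}r^{\sigma+2}W^{p-m}$ of \eqref{PSchange}, reduces \eqref{ODE} to the autonomous system \eqref{PPsyst}, takes the unique orbit on the unstable manifold of the saddle $P_0=(0,0)$ (which encodes the normalization $W(0)>0$, $W'(0)=0$), and proves a finite zero either by connecting the orbit to a critical point at infinity as in \cite{IMS23}, or via the asymptotics $Y\sim -KZ^{m/(m-p)}$, which yield $(W^m)'(r)\sim -cr^{\theta-1}$ with $\theta>2$ and contradict $(W^m)'(r_n)\to0$ along a subsequence if $W$ stayed positive. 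Your argument replaces this machinery by the integrated flux identity $r^{N-1}v'(r)=-\int_0^r s^{N-1+\sigma}v^{p/m}(s)\,ds$ for $v=W^m$, which gives monotonicity at once, and by the pointwise bound $-v'(r)\geq c\,r^{\sigma+1}v(r)^{p/m}$ obtained from the annulus $[r/2,r]$; separation of variables then uses $1-p/m>0$ --- exactly the standing hypothesis $p<m$ --- to force a finite Dirichlet interface with $v'(r_0)<0$. Both proofs are sound; yours is more elementary and self-contained (no Poincar\'e--Bendixson theory, no critical points at infinity, no appeal to \cite{IMS23, IS21}) and, notably, is uniform in dimension, whereas the paper must treat $N=1$ and $N=2$ separately because the nature of $P_0$ changes there. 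What the paper's route buys in exchange is finer asymptotic information on the profile (the explicit local behavior near $r=0$ with its constant $D$, and the rate \eqref{interm12} near the zero) and coherence with the phase-plane framework used throughout the authors' program. Two small remarks: local solvability at $r=0$ is even easier than you suggest, since $v(0)=a^m>0$ makes $v^{p/m}$ locally Lipschitz where it matters (sublinearity only threatens uniqueness at $v=0$, which you never invoke because the solution is stopped at the interface); and your variational aside, while a legitimate bypass, would need the truncation $v_+$ in the functional, a nontriviality check for the minimizer via $E(t\phi)<0$ for small $t>0$, and enough regularity to read off $W_1(0)>0$, $W_1'(0)=0$ --- but you correctly flag it as optional, and the shooting route stands on its own.
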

\begin{proof}
Let us remark that, by setting $r=|x|$, a radially symmetric and stationary solution to the problem \eqref{HDP} is in fact a solution to the differential equation
\begin{equation}\label{ODE}
(W^m)''(r)+\frac{N-1}{r}(W^m)'(r)+r^{\sigma}W^p=0, \qquad r\in(0,R),
\end{equation}
such that $W(R)=0$. Notice also at this point that a straightforward calculation gives that, if $W$ is a solution to \eqref{ODE} with $W(1)=0$, then the rescaling defined in \eqref{resc.stat} gives a solution to \eqref{ODE} with $W(R)=0$. In order to establish the existence of this family of solutions (which are all rescaled versions of a single one), we employ a technique of phase plane analysis. Let us thus introduce the variables
\begin{equation}\label{PSchange}
Y(\eta):=\frac{rW'(r)}{W(r)}, \qquad Z(\eta):=\frac{1}{m}r^{\sigma+2}W(r)^{p-m},
\end{equation}
with the new independent variable $\eta$ given by $\eta=\ln\,r$. We then get
$$
W'(r)=\frac{W(r)Y(r)}{r}, \qquad (W^m)'(r)=\frac{m}{r}W^m(r)Y(r),
$$
hence, by differentiating once more with respect to $r$ and taking into account that
$$
\frac{dY}{dr}=\frac{1}{r}\frac{dY}{d\eta},
$$
we easily find by direct calculation that
$$
(W^m)''(r)=\frac{m}{r^2}W^m(r)\left[\frac{dY}{d\eta}(r)+mY^2(r)-Y(r)\right].
$$
Replacing these expressions into \eqref{ODE} and passing to the variable $\eta$, we are left with the following autonomous dynamical system (where dot derivatives are derivatives with respect to $\eta$):
\begin{equation}\label{PPsyst}
\left\{\begin{array}{ll}\dot{Y}=-(N-2)Y-mY^2-Z, \\ \dot{Z}=Z(\sigma+2-(m-p)Y),\end{array}\right.
\end{equation}
a system that has been analyzed in detail in previous works \cite[Proposition 4.2]{IMS23} (for $m>1$, $p\in(1,m)$ and $\sigma\in(-2,0)$) and \cite[Section 5]{IMS23c} (for $m<1$, $p>1$). For the sake of completeness, we give some details of this analysis. Let us observe that the system \eqref{PPsyst} has two finite critical points, namely $P_0=(0,0)$ and $P_1=(-(N-2)/m,0)$, since we are only interested in the half-plane $\{Z\geq0\}$, by the definition of $Z$ in \eqref{PSchange}. It is easy to see (just by computing the matrix of the linearization) that if $N\geq3$, $P_0$ is a saddle point, while $P_1$ is an unstable node. Since unstable manifolds are unique (see \cite[Theorem 3.2.1]{GH}), we infer that there exists a unique orbit going out of $P_0$ on the (one-dimensional) unstable manifold of this saddle point.

Let us look next at this orbit. On the one hand, by integrating the linearized system of \eqref{PPsyst} in a neighborhood of the origin, we find that the manifold begins with
\begin{equation}\label{orbit}
Y(\eta)\sim-\frac{1}{N+\sigma}Z(\eta),
\end{equation}
which gives, in terms of the original function $W(r)$ (after undoing \eqref{PSchange} and integrating) a local behavior given by
$$
W(r)\sim\left[D-\frac{m-p}{(N+\sigma)(\sigma+2)}r^{\sigma+2}\right]^{1/(m-p)}, \qquad {\rm as} \ r\to0, \qquad D>0,
$$
which in particular fulfills the conditions
$$
W(0)=D^{1/(m-p)}>0, \qquad W'(0)=0.
$$
On the other hand, the unique trajectory in the unstable manifold of $P_0$ goes out into the negative half-plane $\{Y<0\}$ and remains there forever, since the flow of the system across the line $\{Y=0\}$ points towards the negative direction. Moreover, considering the isocline
\begin{equation}\label{iso}
Z=-(N-2)Y-mY^2,
\end{equation}
we observe that in a neighborhood of the origin, its slope is given by $Z/Y\sim-(N-2)$, while the slope of the trajectory \eqref{orbit} is given by $Z/Y\sim-(N+\sigma)$. Since $N+\sigma>N-2$, we observe that the trajectory \eqref{orbit} goes out into the upper region $\{Z>-(N-2)Y-mY^2\}$ of the negative half-plane $\{Y<0\}$. Taking the normal direction $\overline{n}=(-(N-2)-2mY,-1)$, we readily find that the flow of the system across the isocline \eqref{iso} is given by the sign of
$$
-Z(\sigma+2-(m-p)Y)<0, \qquad {\rm since} \ Y<0,
$$
hence the trajectory from $P_0$ cannot cross the isocline \eqref{iso} and has to stay forever in the region
$$
\mathcal{D}=\{Y<0, Z>-(N-2)Y-mY^2, Z>0\}.
$$
Notice next that, according to the two equations of the system \eqref{PPsyst}, we have $\dot{Y}<0$ and $\dot{Z}>0$ in the region $\mathcal{D}$, hence $Y(\eta)$ is decreasing with respect to $\eta$ and $Z(\eta)$ is increasing with respect to $\eta$ along all the trajectory. This shows that there exist
$$
Y_{\infty}:=\lim\limits_{\eta\to\infty}Y(\eta)<0, \qquad Z_{\infty}:=\lim\limits_{\eta\to\infty}Z(\eta)>0,
$$
and at least one of $Y_{\infty}$, $Z_{\infty}$ is not finite. Indeed, if both $Y_{\infty}$, $Z_{\infty}$ are finite, then the point $(Y_{\infty},Z_{\infty})$ would be critical for the system \eqref{PPsyst} according to Poincar\'e-Bendixon's theory \cite[Theorem 1, Section 3.7]{Pe}, and we know that the only finite critical points are $P_0$ and $P_1$. The previous arguments also prove that $Y(\eta)<0$ for any $\eta\in\real$ on the trajectory under study, hence
$$
\dot{Z}(\eta)=Z(\eta)(\sigma+2-(m-p)Y(\eta))>(\sigma+2)Z(\eta),
$$
which readily implies that $Z_{\infty}=+\infty$. We show next that $Y_{\infty}=-\infty$. Arguing by contradiction, suppose that $Y_{\infty}\in(-\infty,0)$. Since $Z(\eta)$ is a bijective function of $\eta\in\real$, the inverse function theorem allows us to express $Y$ as a function of $Z$ along the trajectory, and the system \eqref{PPsyst} then gives
$$
\frac{dY}{dZ}=-\frac{(N-2)Y+mY^2+Z}{Z[\sigma+2-(m-p)Y]}\to-\frac{1}{\sigma+2-(m-p)Y_{\infty}}\in(-\infty,0),
$$
which gives a linear behavior $Y=Y(Z)$ in the limit and contradicts the existence of such a vertical asymptote at $Y=Y_{\infty}\in(-\infty,0)$. We thus conclude that $Y_{\infty}=-\infty$. Coming then back to the system \eqref{PPsyst} and neglecting the lower order terms, we conclude that along the trajectory contained in the unstable manifold of $P_0$, we have
$$
\frac{dY}{dZ}\sim\frac{mY^2+Z}{(m-p)YZ}, \qquad {\rm as} \ Z\to\infty,
$$
or equivalently, after an integration and taking into account that $Z$ is bijective as a function of $\eta$,
\begin{equation}\label{interm12}
Y\sim-KZ^{m/(m-p)}, \qquad {\rm as} \ Z\to\infty, \qquad K>0.
\end{equation}
We may then conclude by following the end of the proof of \cite[Proposition 4.2]{IMS23} and deducing that this orbit connects to a critical attractor at infinity identified by $Y/Z\to-\infty$ as $Z\to\infty$, named $Q_3$ and analyzed in \cite[Lemma 3.4]{IMS23} (see also \cite[Lemma 2.6]{IS21}), which is characterized by a finite zero of the profiles $W(r)$ contained in the orbits entering it.

However, for the reader's convenience, we give here an alternative, direct argument for the existence of a finite zero. Assume for contradiction that there exists a profile $W$ contained in the unstable manifold of $P_0$ such that $W(r)>0$ for any $r\in[0,\infty)$. Since $W$ is decreasing with $r$, it follows that there exists $L\geq0$ such that $W(r)\to L$ as $r\to\infty$. Standard calculus arguments (see for example \cite[Lemma 2.9]{IL13}) prove that also $(W^m)'(r_n)\to0$ on a subsequence $r_n\to\infty$. Moreover, we infer from \eqref{interm12} and \eqref{PSchange} that
\begin{equation}\label{interm13}
\lim\limits_{r\to\infty}\frac{(W^m)'(r)}{r^{\theta-1}}=-Km^{-p/(m-p)}, \qquad \theta:=\frac{m(\sigma+2)}{m-p}>0.
\end{equation}
Since $\sigma>0$, we in fact have that $\theta>2$, and thus we reach a contradiction by evaluating the limit in \eqref{interm13} over the subsequence $r_n$ for which we established that $(W^m)'(r_n)\to0$. Thus, our stationary solution $W(r)$ has to have a finite zero.

We have thus proved that there exists at least one stationary, radially symmetric solution as in the statement having a finite zero, and the rescaling \eqref{resc.stat} shows that in fact we have a one-parameter family of such solutions, depending on the point where they attain the finite zero. The same is true in dimension $N=1$ or $N=2$, with the difference that the critical point $P_0$ is no longer a saddle (it becomes a saddle-node in dimension $N=2$ by unification between $P_0$ and $P_1$ according to \cite[Section 3.4]{GH}, and an unstable node in dimension $N=1$) but the trajectory starting as in \eqref{orbit} still exists in these dimensions. We omit the details here, some of them being given in \cite{IMS23}.
\end{proof}
With this construction, we are now in a position to prove Theorem \ref{th.loc}.
\begin{proof}[Proof of Theorem \ref{th.loc}]
The argument of the proof will be by contradiction, thus let us assume that there exists a continuous and compactly supported initial condition $u_0$ as in \eqref{icond} such that the (unique) solution to the Cauchy problem \eqref{eq1}-\eqref{ic} remains localized, in the sense that there exists $R_0$ sufficiently large such that $u(x,t)=0$ for any $x\in\real^N\setminus B(0,R_0)$ and $t\in[0,T]$, where $T$ is the blow-up time of $u$ according to Theorem \ref{th.BU}. Consider $R>0$ sufficiently large such that the radially symmetric, stationary solution $W_R$ to the problem \eqref{HDP} given by \eqref{resc.stat} satisfies $W_R(x)\geq u_0(x)$, for any $x\in\real^N$. This is possible, and in fact, since $W_1$ is decreasing, in order to be fulfilled, it suffices to have $R>R_0$ and
$$
W_R(r)>R^{(\sigma+2)/(m-p)}W_1\left(\frac{R_0}{R}\right)\geq\|u_0\|_{\infty}, \qquad r\in[0,R_0]
$$
which obviously holds true, provided $R$ is taken sufficiently large. With this choice of $R$, and due to the localization of the solution $u$ in $B(0,R_0)\subset B(0,R)$, we find that both $u$ and $W_R$ are solutions to the Dirichlet problem \eqref{HDP} on $B(0,R)$. But, since $\sigma>0$, we have $|x|^{\sigma}\in L^{\infty}(B(0,R))$ and the comparison principle for the Dirichlet problem \cite[Proposition 2.2]{Su02} then entails that
$$
u(x,t)\leq W_R(r), \qquad x\in B(0,R), \ t\in[0,T].
$$
We have thus reached a contradiction with the fact that $u$ blows up at time $T$, since the stationary solution $W_R$ is always bounded, ending the proof.
\end{proof}

\bigskip

\noindent \textbf{Acknowledgements} R. G. I. and A. S. are partially supported by the Grants PID2020-115273GB-I00 and RED2022-134301-T (Spain). M. L is partially supported by the Grant PID2022-136589NB-I00, all grants funded by funded by MCIN/AEI/10.13039/501100011033.

\bibliographystyle{plain}

\end{document}